\newtheorem{tw}{Theorem}[section]
\newtheorem{pr}[tw]{Proposition}
\newtheorem{lm}[tw]{Lemma}
\theoremstyle{definition}
\author{\L ukasz Matysiak\\
Kazimierz Wielki University\\
Bydgoszcz, Poland \\
lukmat@ukw.edu.pl}
\title{On some properties of polynomial composites}
\begin{document}

\maketitle

\begin{abstract}
Polynomial composites were introduced by Anderson, Anderson, and Zafrullah. Over time, composites have appeared in many different papers, but they have not been sorted out in the algebra world. This paper is another part of the study of composites as an algebraic structure.
In this paper we complete possible properties for polynomial composites as ACCP, atomic, BFD, HFD, idf, FFD domains. 
In a separate section, we consider polynomial composites as Dedekind rings. 
\end{abstract}

\begin{table}[b]\footnotesize\hrule\vspace{1mm}
	Keywords: domain, field, irreducible element, polynomial.\\
2010 Mathematics Subject Classification:
Primary 13F05, Secondary 08A40.
\end{table}

\section{Introduction}

By the ring we mean a commutative ring with unity. Let $R$ be an itegral domain.
We denote by $R^{\ast}$ the group of all invertible elements of $R$. 

\medskip

The main motivation of this paper is description polynomial composites as algebraic object. 
The related works were started in paper \cite{mm1}, where basic algebraic properties have been investigated. Continued in \cite{mm2}, where the focus was on ACCP properties and atomicity. 
This paper is the finalization of fundamental research in polynomial composites.

\medskip

D.D.~Anderson, D.F.~Anderson, M. Zafrullah in \cite{1} called object $A+XB[X]$ as a composite, where $A\subset B$ be fields. 

\medskip

There are many works where composites are used as examples to show some properties. But the most important works are presented below.

\medskip

In 1976 \cite{y1} authors considered the structures in the form $D+M$, where $D$ be a domain and $M$ be a maximal ideal of ring $R$ with $D\subset R$. 
Next, Costa, Mott and Zafrullah (\cite{y2}, 1978) considered composites in the form $D+XD_S[X]$, where $D$ be a domain and $D_S$ be a localization of $D$ relative to the multiplicative subset $S$. In 1988 \cite{y5} Anderson and Ryckaert studied classes groups $D+M$.
Zafrullah in \cite{y3} continued research on structure $D+XD_S[X]$ but he showed that if $D$ be a GCD-domain, then the behaviour of $D^{(S)}=\{a_0+\sum a_iX^i\mid a_0\in D, a_i\in D_S\}=D+XD_S[X]$ depends upon the relationship between $S$ and the prime ideals $P$ of $D$ such that $D_P$ be a valuation domain (Theorem 1, \cite{y3}).
Fontana and Kabbaj in 1990 (\cite{y4}) studied the Krull and valuative dimensions of composite $D+XD_S[X]$. 
In 1991 there was an article (\cite{1}) that collected all previous results about composites and authors began to create a further theory about composites creating results. In this paper, the considered structures were officially called composites.

\medskip

In the second section we present many properties in polynomial composites as domains. Recall, we say that an domain $R$ satysfying ACCP condition (has ACCP) if each increasing sequence of principal ideals is stationary (Proposition \ref{pr1}). 
An domain $R$ be atomic, where every nonzero noninvertible element can be presented as the product of irreducible elements (atoms) (Proposition \ref{pr1}).
The domain $R$ is a bounded factorization domain (BFD) if $R$ is atomic and for each nonzero nonunit of $R$ there is a bound on the length of factorizations into products of irreducible elements (Propositions \ref{pr2}, \ref{pr3}).  
We say that $R$ is a half-factorial domain (HFD) if is atomic and each factorization of a nonzero nonunit of $R$ into a product of irreducible elements has the same length (Propositions \ref{pr4}, \ref{pr10}).
The domain $R$ is an idf-domain (for irreducible-divisor-finite) if each nonzero element of $R$ has at most a finite number of nonassociate irreducible divisors (Propositions \ref{pr5}, \ref{pr6}). 
A domain is called finite factorization domain (FFD) if each nonzero nonunit element has only a finite number of nonassociate divisors (Proposition \ref{pr7}). In general,

$$\begin{array}{ccccccccc}
&&HFD\\
&\mbox{\rotatebox{-135}{$\Leftarrow$}} & \Uparrow & \mbox{\rotatebox{135}{$\Rightarrow$}} \\
UFD&\Rightarrow &FFD&\Rightarrow &BFD&\Rightarrow ACCP&\Rightarrow& atomic  \\
&\mbox{\rotatebox{135}{$\Leftarrow$}}&\Downarrow \\
&&idf
\end{array}$$

Recall that $R$ is an S-domain if for each height-one prime ideal $P$ of $R$, $ht P[X]=1$ in $R[X]$ (Proposition \ref{pr8}).
A commutative ring $R$ is called a Hilbert ring if every prime ideal of $R$ is an intersection of maximal ideals of $R$ (Theorem \ref{tw9}).

In Proposition \ref{pr12} we have information about composite cover.

\medskip

In the third section we have statements about polynomial composites as Dedekind domains. 
It turns out that polynomial composites of the form $K+XL[X]$ be a Dedekind rings (Theorem \ref{Dedekind}).

\section{Results}

In papers \cite{mm1} and \cite{mm2}, polynomial composites with the property of atomicity and ACCP are presented. The results below are complementary.

\begin{pr}
	\label{pr1}
	Let $T=K+XL[X]$, where $K$, $L$ are fields with $K\subset L$. Let $D$ be a subring of $K$ and $R=D+XL[X]$. Then:
	\begin{itemize}
		\item[(a) ] $R$ is atomic if and only if $T$ is atomic and $D$ is a field.
		\item[(b) ] $R$ satisfies ACCP if and only if $T$ satisfies ACCP and $D$ is a field.
	\end{itemize}
\end{pr}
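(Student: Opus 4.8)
The plan is to run everything through the degree function on $L[X]$ together with the identifications of the unit groups. First I would record two facts. Since $R,T\subseteq L[X]$, any factor of $1$ in $R$ or in $T$ is a nonzero constant, so $R^{\ast}=D^{\ast}$ and $T^{\ast}=K^{\ast}$; and if $g\mid f$ in $R$ (or in $T$) with $f\neq0$, then $g\mid f$ in $L[X]$, so $\deg g\le\deg f$. Hence in any ascending chain of principal ideals $(f_1)\subseteq(f_2)\subseteq\cdots$ the degrees $\deg f_i$ are non-increasing, so eventually constant, say equal to $e$ for $i\ge N$, and then each quotient $f_i/f_{i+1}$ ($i\ge N$) is a nonzero constant of the ring. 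Applied to $T$, whose constants form the field $K$, this shows $T$ always satisfies ACCP and is therefore always atomic; so in both (a) and (b) the hypotheses on $T$ are automatic, and the real content is the equivalence with $D$ being a field.

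For (b), the backward direction: if $D$ is a field, then in the chain above every quotient $f_i/f_{i+1}$ ($i\ge N$) is a nonzero element of $D$, hence lies in $D^{\ast}=R^{\ast}$, so the chain stabilises and $R$ satisfies ACCP. For the forward direction, suppose $R$ satisfies ACCP but $D$ is not a field, and fix a nonzero nonunit $d\in D$. Since $d^{-1}\in L$, each $d^{-k}X$ lies in $XL[X]\subseteq R$, and $d^{-k}X=d\cdot d^{-(k+1)}X$ gives $(d^{-k}X)\subseteq(d^{-(k+1)}X)$; the inclusion is strict, for equality would force $d^{-1}\in R^{\ast}=D^{\ast}$ and hence $d\in D^{\ast}$. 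This infinite strictly ascending chain contradicts ACCP, so $D$ is a field.

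For (a), the backward direction follows from (b): if $D$ is a field then $R$ satisfies ACCP, hence is atomic. For the forward direction I would argue by contradiction. Suppose $R$ is atomic and $D$ is not a field; fix a nonzero nonunit $d\in D$ and consider $dX\in R$, which is a nonzero nonunit. In any factorisation $dX=f_1\cdots f_m$ into atoms of $R$ one has $\sum_i\deg f_i=1$, so exactly one factor, say $g$, has degree $1$ while the rest are nonzero constants in $D$; writing $c$ for the product of those constants and $g=b_0+b_1X$, we get $cb_0=0$ and $cb_1=d$, so $b_0=0$ and $g=(d/c)X$ with $c\in D\setminus\{0\}$. But then $g=d\cdot(c^{-1}X)$ with $d$ a nonunit of $R$ and $c^{-1}X\in XL[X]\subseteq R$ a nonunit, so $g$ is reducible, contradicting that $g$ is an atom. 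Hence $dX$ has no factorisation into atoms and $R$ is not atomic.

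The step I expect to be the crux is this last one: not the observation that $dX$ itself factors, but the fact that every atom factorisation of $dX$ is impossible. The degree count is what forces it — the unique positive-degree factor in any such factorisation must be of the form $bX$, and such an element is reducible the moment $D$ has a nonzero nonunit. Everything else is bookkeeping with degrees and with $R^{\ast}=D^{\ast}$, $T^{\ast}=K^{\ast}$.
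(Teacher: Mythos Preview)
Your proof is correct. The route differs from the paper's in one notable way: you open by observing that $T=K+XL[X]$ \emph{always} satisfies ACCP (via the degree argument and $T^{\ast}=K^{\ast}$), so the clauses ``$T$ is atomic'' and ``$T$ satisfies ACCP'' are vacuous, and the real equivalence is with $D$ being a field. You then argue the $R$-side directly. The paper instead does not use (or state) that $T$ automatically has ACCP; after disposing of the case where $D$ is not a field, it assumes $D$ is a field and sets up a transfer principle between $R$ and $T$: elements of either ring are, up to units, of the form $f$ or $1+f$ with $f\in XL[X]$, and irreducibility and chains of principal ideals match up across the two rings, so $R$ is atomic (resp.\ ACCP) iff $T$ is. Your argument is shorter and more self-contained; the paper's correspondence is reusable, and indeed the same transfer template is recycled in the subsequent propositions on BFD, HFD, idf, and FFD. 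For the necessity of $D$ being a field both proofs use the same idea---$f=d\cdot(f/d)$ shows nothing in $XL[X]$ is irreducible once $D$ has a nonzero nonunit---though you phrase it via the strictly ascending chain $(d^{-k}X)$ for ACCP and a degree count on a putative atom factorisation of $dX$ for atomicity.
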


\begin{proof}
	First suppose that $D$ is not a field. Then $f=d\dfrac{m}{f}$ for each $f\in XL[X]$ and $d\in D^{\ast}$. Thus no element of $XL[X]$ is irreducible ($XL[X]$ is a maximal ideal of $T$). Hence if $R$ is either atomic or satisfies ACCP, $D$ must be a field. So let $D$ be a field.
	
	\begin{itemize}
		\item[(a) ] Up to multiplication by a $\alpha\in K^{\ast}$ (resp. $\alpha\in D^{\ast}$), each element of $T$ (resp. $R$) has the form $f$ or $1+f$ for some $f\in XL[X]$. Each of these elements is irreducible in $R$ if and only if it is irreducible in $T$ (\cite{16}, Lemma 1.5; 27). If $x$ is a product of irreducibles, we may assume that each irreducible factor has the form $f$ or $1+f$ for some $f\in XL[X]$. Thus $x$ is a product of irreducible elements in $R$ if and only if it is a product of irreducible elements in $T$. Hence $R$ is atomic if and only if $T$ is atomic.
		
		\item[(b) ] We first observe that a principal ideal of $R$ or $T$ may be generated by either $f$ or $a+f$ for some $f\in XL[X]$. Let $f$, $g\in XL[X]$. It easily verified that $(1+f)R\subset (1+g)R$ if and only if $(1+f)T\subset (1+g)T$, $fR\subset (1+g)R$ if and only if $fT\subset (1+g)T$, and $fR\subset gR$ if and only if $fT\subset gT$. Also, if $fT\subset gT$, then $fR\subset (\alpha g)R$ for some $\alpha\in K^{\ast}$. Hence, to each chain of principal ideals of length $s$ in $R$ starting at $fR$ (resp., $(1+f)R$), there corresponds a chain of principal ideals of length $s$ in $T$ starting at $fT$ (resp., $(1+f)T$), and conversely. Thus $R$ satisfies ACCP if only if $T$ satisfies ACCP.
		\end{itemize} 
\end{proof}

In \cite{0} Anderson, Anderson and Zafrullah asked the following question:

\medskip

\noindent 
{\bf Question 1} If $R$ is atomic, then $R[X]$ is atomic?

\medskip

In \cite{mm2}, I considered the question and concluded that the answer was negative.

\medskip

The propositions \ref{pr2}, \ref{pr3} represent the BFD property in polynomial composites.

\begin{pr}
	\label{pr2} 
	If $A+XB[X]$ is a noetherian domain, where $A\subset B$ are domains, then $A+XB[X]$ is a BFD.
\end{pr}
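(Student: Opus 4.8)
The plan is to reduce everything to the classical fact that a Noetherian domain is a bounded factorization domain; the hypothesis here is precisely that $R:=A+XB[X]$ is a Noetherian domain, and the special shape $A+XB[X]$ plays no further role. First I would record atomicity: a Noetherian domain satisfies the ascending chain condition on principal ideals, hence is atomic, since otherwise a nonzero nonunit that is not a finite product of atoms would yield a strictly ascending chain of principal ideals (factor it as a product of two nonunits, one of which is again not a product of atoms, and iterate). So it remains to exhibit, for each nonzero nonunit $x\in R$, an integer $N(x)$ bounding the length of every factorization of $x$ into atoms.

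For the bound I would localize at the minimal primes of $(x)$. Since $R$ is Noetherian, $(x)$ has only finitely many minimal primes $P_1,\dots,P_k$; since $R$ is a domain and $x$ is a nonzero nonunit, Krull's principal ideal theorem gives $\operatorname{ht}P_j=1$, so each $R_{P_j}$ is a one-dimensional Noetherian local domain and $\ell_j:=\operatorname{length}_{R_{P_j}}\!\big(R_{P_j}/xR_{P_j}\big)$ is finite; put $N(x)=\ell_1+\cdots+\ell_k$.

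Now let $x=a_1\cdots a_n$ be any factorization into atoms. The key point is that each $a_i$ lies in some $P_j$: a prime $Q$ minimal over $(a_i)$ has height one by Krull's theorem (as $a_i$ is a nonzero nonunit), and it contains $(x)=(a_1\cdots a_n)$, hence contains some minimal prime of $(x)$, which then equals $Q$ by comparing heights. Fix $j$ and localize at $P_j$: each $a_i\notin P_j$ becomes a unit, so $xR_{P_j}=\prod_{a_i\in P_j}a_iR_{P_j}$, a product of proper ideals of the domain $R_{P_j}$; successively omitting factors produces a chain of $|\{i:a_i\in P_j\}|+1$ distinct principal ideals between $xR_{P_j}$ and $R_{P_j}$, whence $|\{i:a_i\in P_j\}|\le\ell_j$. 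Summing over $j$, and using that the $P_j$ cover $\{a_1,\dots,a_n\}$, gives $n\le N(x)$. Thus $R$ is atomic with uniformly bounded factorization lengths, i.e. a BFD.

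I do not expect a serious obstacle; the step needing the most care is the finiteness of $\ell_j$, which is exactly where one-dimensionality (via Krull's principal ideal theorem) is used, together with the dual observation that every atomic factor of $x$ is captured by one of the minimal primes of $(x)$. Alternatively one may simply invoke the general implication ``Noetherian domain $\Rightarrow$ BFD'' from Anderson--Anderson--Zafrullah; the reasoning above is in essence a proof of it.
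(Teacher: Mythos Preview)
Your argument is correct and matches the paper's approach exactly: the paper simply cites \cite{0}, Proposition~2.2 (Noetherian domain $\Rightarrow$ BFD), which is precisely the general implication you both invoke at the end and prove in detail. The special form $A+XB[X]$ indeed plays no role, as you observed.
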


\begin{proof}
	\cite{0}, Proposition 2.2.
\end{proof}

\begin{pr}
	\label{pr3} 
	Let $T=K+XL[X]$, where $K\subset L$ are fields. Let $D$ be a subring of $K$ and $R=D+XL[X]$. Then $R$ is a BFD if and only if $T$ is a BFD and $D$ is a field.
\end{pr}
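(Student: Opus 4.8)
The plan is to follow the proof of Proposition~\ref{pr1} almost verbatim, adding only the bookkeeping needed to keep track of factorization lengths. First, exactly as there: if $D$ is not a field, choose a nonzero nonunit $d\in D$; then every $f\in XL[X]$ factors as $f=d\cdot(d^{-1}f)$ with $d^{-1}f\in XL[X]$, and $d$ and $d^{-1}f$ are both nonunits of $R$, so $XL[X]$ contains no irreducible element of $R$. Since $R/XL[X]\cong D$ is a domain, $XL[X]$ is a prime ideal of $R$, so no nonzero element of $XL[X]$ is a product of irreducibles; hence $R$ is not atomic, and \emph{a fortiori} not a BFD. Thus $R$ being a BFD forces $D$ to be a field, and I assume this from now on.

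Assume $D$ is a field. By Proposition~\ref{pr1}(a), $R$ is atomic iff $T$ is atomic; moreover, as in the proof of that proposition, up to multiplication by a unit (of $D^{\ast}$ in $R$, of $K^{\ast}$ in $T$) every nonzero nonunit of $R$, resp.\ of $T$, has the form $f$ or $1+f$ with $f\in XL[X]$, and such an element lies in both $R$ and $T$ and is irreducible in $R$ iff it is irreducible in $T$. Since multiplying by a unit does not change the set of factorization lengths of an element, it suffices to prove that for $x$ of the form $f$ or $1+f$ the set of lengths of factorizations of $x$ into irreducibles is the same computed in $R$ as in $T$: then $R$ has bounded factorization lengths iff $T$ does, and together with the atomicity equivalence this gives that $R$ is a BFD iff $T$ is a BFD, which with the first paragraph is the assertion.

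For the key equivalence, the inclusion of the $R$-lengths into the $T$-lengths is immediate: a factorization $x=p_1\cdots p_n$ into irreducibles of $R$ is already an identity in $T\supseteq R$, and each $p_i$ is a $T$-associate (via a unit of $D^{\ast}\subseteq K^{\ast}$) of its normal form, which is irreducible in $R$, hence in $T$, hence $p_i$ is irreducible in $T$. Conversely, let $x=q_1\cdots q_n$ with each $q_i$ irreducible in $T$; note a $q_i$ need \emph{not} lie in $R$. If $x=1+f$, every $q_i$ has nonzero constant term, so $q_i=\alpha_i(1+g_i)$ with $\alpha_i\in K^{\ast}$ and $1+g_i\in R$ irreducible in $R$; comparing constant terms forces $\prod_i\alpha_i=1$, so $x=\prod_i(1+g_i)$ is a length-$n$ factorization in $R$. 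If $x=f\in XL[X]$, then, $XL[X]$ being prime, some $q_i$ lies in $XL[X]$; reindex so that $q_1,\dots,q_k\in XL[X]$ and $q_{k+1},\dots,q_n$ have nonzero constant term, write $q_j=\alpha_j(1+g_j)$ with $\alpha_j\in K^{\ast}$ for $j>k$, and put $\alpha=\prod_{j>k}\alpha_j\in K^{\ast}$; then $x=(\alpha q_1)\,q_2\cdots q_k\,(1+g_{k+1})\cdots(1+g_n)$, where $\alpha q_1\in XL[X]\subseteq R$ is irreducible in $R$ (a $T$-associate of the $T$-irreducible $q_1$), the $q_2,\dots,q_k\in XL[X]$ are irreducible in $R$ by the correspondence, and the $1+g_j\in R$ are irreducible in $R$ — a length-$n$ factorization of $x$ in $R$. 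Hence the two length-sets coincide. The only delicate point — the main obstacle — is exactly this passage from $T$ to $R$: a $T$-irreducible factor need not lie in $R$, so one must renormalize it by a $K^{\ast}$-unit and then check that the accumulated unit $\prod\alpha_j$ can be absorbed into a factor lying in $XL[X]$ without leaving $R$ or losing irreducibility; everything else is a routine transcription of Proposition~\ref{pr1}.
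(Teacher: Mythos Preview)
Your proof is correct and follows essentially the same approach as the paper's, which simply cites \cite{0}, Proposition~1.2 (the source of Proposition~\ref{pr1}) for both the necessity that $D$ be a field and for the BFD equivalence between $R$ and $T$. What you have done is spell out in full the length-bookkeeping that the paper leaves implicit in that citation; in particular, your renormalization step for passing a $T$-factorization of $f\in XL[X]$ back into $R$ by absorbing the accumulated $K^{\ast}$-unit into one $XL[X]$-factor is exactly the extra detail needed, and it is handled correctly.
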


\begin{proof}
	First suppose that $R$ is BFD. Then $D$ must be a field (\cite{0}, Proposition 1.2). Again from the proof of (\cite{0}, Proposition 1.2) we get that $R$ is a BFD if and only if $T$ is a BFD.  
\end{proof}

The propositions \ref{pr4} and \ref{pr10} represents the HFD property in polynomial composites.

\begin{pr}
	\label{pr4} 
	Let $T=K+XL[X]$, where $K\subset L$ are fields. Let $D$ be a subring of $K$ and $R=D+XL[X]$. Then $R$ is a HFD if and only if $D$ is a field and $T$ is a HFD. 
\end{pr}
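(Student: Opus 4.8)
The structure should mirror the proof of Proposition \ref{pr1}(a) and the BFD case (Proposition \ref{pr3}), since HFD sits between those notions in the implication diagram (HFD plus ACCP, or just HFD as an atomicity-type condition). So I would first dispose of the "$D$ is a field" necessity: if $D$ is not a field, pick $d \in D^{\ast}$ nonunit and note that for any $f \in XL[X]$ we have $f = d \cdot (d^{-1}f)$ with $d^{-1}f \in XL[X]$ (since $L$ is a field, $d^{-1} \in L$), so $XL[X]$ contains no irreducibles; but $XL[X]$ is the maximal ideal, so $R$ cannot be atomic, hence cannot be an HFD. Thus $D$ must be a field, and we may assume $D$ is a field for the rest.

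Assuming $D$ is a field, the key is the normal-form observation already used in Proposition \ref{pr1}: up to a unit (from $D^{\ast}$ in $R$, from $K^{\ast}$ in $T$), every element of $R$ (resp. $T$) is of the form $f$ or $1+f$ with $f \in XL[X]$, and by the cited fact (\cite{16}, Lemma 1.5) such an element is irreducible in $R$ if and only if it is irreducible in $T$. Therefore a factorization of $x$ into irreducibles in $R$ of length $n$ yields, after absorbing units, a factorization into irreducibles in $T$ of the same length $n$, and conversely (the unit adjustments change no factor count because units in $T$ landing inside $R$ can be pushed onto a single factor, exactly as in the ACCP argument of Proposition \ref{pr1}(b)). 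Hence the set of factorization lengths of any nonzero nonunit $x$ is the same whether computed in $R$ or in $T$. Combined with "atomic in $R$ iff atomic in $T$" from Proposition \ref{pr1}(a), this gives: $R$ is an HFD iff $T$ is an HFD.

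The one point requiring care — and the main obstacle — is the bookkeeping of units when translating a factorization from $T$ back down to $R$: a priori a factor of the form $\alpha g$ with $\alpha \in K^{\ast} \setminus D^{\ast}$ and $g \in XL[X]$ is irreducible in $T$ but we must realize it inside $R$. Here one uses that $\alpha g \in XL[X] \subset R$ already, and that whether we call the factor $g$ or $\alpha g$ does not affect irreducibility or the length count; the scalar $\alpha$ can always be reabsorbed into the "constant-term" normalization. I would spell this out by noting that the correspondence $x \mapsto (\text{normalized } x)$ is length-preserving in both directions, so no genuine obstruction arises — it is the same mechanism that makes Proposition \ref{pr1}(b) work, and I would simply cite that argument rather than repeat it.
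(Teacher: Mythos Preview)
Your proposal is correct and follows essentially the same approach as the paper, which simply cites Proposition~1.2 of \cite{0} (the analog of Proposition~\ref{pr1} here) both for the necessity that $D$ be a field and for the observation that factorizations into irreducibles have the same length in $R$ as in $T$; you have merely spelled out what the paper compresses into a citation. One trivial slip: ``$d \in D^{\ast}$ nonunit'' should read ``$d \in D$ a nonzero nonunit,'' since $D^{\ast}$ denotes the unit group.
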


\begin{proof}
	As in Proposition 1.2 (\cite{0}), $D$ is necessarily a field. The proof of Proposition 1.2 shows that a factorization into irreducibles in $R$ has the same length as such a factorization in $T$. Hence $R$ is a HFD if and only if $T$ is a HFD. 
\end{proof}

\begin{pr}
	\label{pr10}
	Let $A$ be a subring of a field $K$. Then $R=A+XK[X]$ is a HFD if and only if $A$ is a field.
\end{pr}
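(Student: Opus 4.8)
The plan is to obtain this as the degenerate case $L=K$ of Proposition~\ref{pr4}. In that proposition take $D=A$ and $L=K$: the standing hypothesis becomes the (trivial) inclusion $K\subset K$ of fields, and $D=A$ is a subring of $K$ by assumption, so Proposition~\ref{pr4} applies with these choices. Under them $T=K+XK[X]=K[X]$ and $R=D+XL[X]=A+XK[X]$, which is exactly the ring under consideration.

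Next I would note that $T=K[X]$ is automatically a HFD. Since $K$ is a field, $K[X]$ is a principal ideal domain, hence a UFD, and by the implication diagram in the introduction every UFD is a HFD. Consequently the clause ``$T$ is a HFD'' in Proposition~\ref{pr4} is always satisfied here, and the biconditional of Proposition~\ref{pr4} collapses to: $R=A+XK[X]$ is a HFD if and only if $A$ is a field. That is precisely the assertion.

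The only point that needs care is whether the hypotheses of Proposition~\ref{pr4} genuinely cover the case $L=K$; they do, since nothing in its statement or proof forces the inclusion $K\subset L$ to be proper (the proof only uses that $K$ is a field contained in $L$ and that a factorization into irreducibles in $R$ has the same length as one in $T$). Thus there is no real obstacle. If a self-contained argument were preferred, one could instead imitate the opening of the proof of Proposition~\ref{pr1}: when $A$ is not a field, choose a nonzero nonunit $d\in A$; then every nonzero $f\in XK[X]$ factors as $f=d\cdot(d^{-1}f)$ with $d$ and $d^{-1}f$ both nonunits of $R$ (the units of $R$ being exactly $A^{\ast}$), so no nonzero element of $XK[X]$ is irreducible; since in any factorization $X=g_1\cdots g_n$ the constant terms multiply to $0$ and $A$ is a domain, some $g_i$ lies in $XK[X]$ and hence cannot be irreducible, so $R$ is not even atomic, let alone a HFD. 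Conversely, when $A$ is a field, Proposition~\ref{pr4} (or directly the length-preservation between $R$ and $K[X]$) yields the HFD property. Invoking Proposition~\ref{pr4} directly is the shortest route.
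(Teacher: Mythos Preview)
Your reduction to Proposition~\ref{pr4} with $L=K$ is correct, and the observation that $T=K[X]$ is a UFD (hence HFD) is exactly what makes the biconditional collapse to the desired statement. The paper does not take this modular route: it argues directly. For $(\Rightarrow)$ the paper first notes that $R$ HFD forces $A$ HFD, then picks an irreducible $a\in A$ (if $A$ is not a field) and writes $X=a^n(X/a^n)$ to produce factorizations of $X$ of unbounded length, contradicting HFD. For $(\Leftarrow)$ it classifies the irreducibles of $R$ (either $aX$ with $a\in K$, or $a(1+Xf(X))$ with $1+Xf(X)$ irreducible in $K[X]$) and concludes that the number of irreducible factors of any element in $R$ equals its number of irreducible factors in the PID $K[X]$. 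Your approach and the paper's thus rest on the same underlying mechanism (length preservation between $R$ and $K[X]$), but you package it as a corollary of the already-proved Proposition~\ref{pr4}, which is cleaner and avoids repeating the irreducible classification. Your self-contained alternative for $(\Rightarrow)$ (showing $R$ is not even atomic when $A$ is not a field, via $f=d\cdot(d^{-1}f)$) is also correct and is in fact a bit sharper than the paper's unbounded-length argument, since it does not need to first argue that $A$ inherits the HFD property or that the chosen $a$ is irreducible.
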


\begin{proof}
	($\Rightarrow$) Clearly, $R$ a HFD implies that $A$ is a HFD. Suppose that $A$ is not a field, so there is an irreducible element $a\in A$. Then $X=a^n(X/a^n)$ for all $n\in\mathbb{N}$. Thus $A$ must be a field.
	
	\medskip
	
	($\Leftarrow$) Suppose that $A$ is a field. By (moje) $R=A+XK[X]$ is atomic. The proof of Theorem 2.1 \cite{mm1} shows that an irreducible element of $R$ is of the for $aX$, where $a\in K$ or $a(1+Xf[X])$, where $a\in A$, $f(X)\in K[X]$, and $1+XF(X)$ is irreducible in $K[X]$. Thus for any $g(X)\in R$, the number of irreducible factors from $R$ is the same as the number of irreducible factors in a representation of $g(X)$ as a product of irreducible factors from the PID $K[X]$. Hence $R$ is a HFD.
\end{proof}

Recall that $R$ is an idf-domain if each nonzero element of $R$ has at most a finite number of nonassociate irreducible divisors. 

\begin{pr}
	\label{pr5}
	Let $T=K+XL[X]$, where $K\subset L$ are fields. Let $M$ be a subfield of $K$ and $R=M+XL[X]$. Then:
	\begin{itemize}
		\item[(a) ] Suppose that $XL[X]$ contains an irreducible element. Then $R$ is an idf-domain if and only if $T$ is an idf-domain and the multiplicative group $K^{\ast}/M^{\ast}$ is finite.
		\item[(b) ] Suppose that $XL[X]$ contains no irreducible elements. Then $R$ is an idf-domain if and only if $T$ is an idf-domain.
	\end{itemize}
\end{pr}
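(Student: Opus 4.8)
The plan is to split the irreducible divisors of an element of $R$ (or $T$) into those lying in the maximal ideal $XL[X]$ and those of nonzero constant term, and to control each kind separately, comparing $R$ with $T$. The facts I would set down first, all in the spirit of Proposition~\ref{pr1}: $R^{\ast}=M^{\ast}$, $T^{\ast}=K^{\ast}$, and every element of $T$ is, up to a unit of $T$, already an element of $R$ (the two rings share all elements of zero constant term, and a nonzero constant term can be scaled into $1$). An element of $XL[X]$ that is irreducible in $R$ or $T$ must have degree $1$: from $d=c_1X+c_2X^2+\cdots$ of degree $\ge 2$ one reads off the factorization $d=(c_1X)\bigl(1+\tfrac{c_2}{c_1}X+\cdots\bigr)$ if $c_1\neq 0$, or $d=X\cdot(d/X)$ if $c_1=0$, into two nonunits. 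So the irreducibles inside $XL[X]$ are exactly the $aX$, $a\in L^{\ast}$ (genuinely irreducible since the coefficient ring is a field; in part~(b) this family is empty). For the rest, normalizing the constant term to $1$: by (\cite{16}, Lemma~1.5) an element $1+f$, $f\in XL[X]$, is irreducible in $R$ iff in $T$; two such are associate in $R$ (resp.\ in $T$) iff they are equal, since units are constants and comparing constant terms forces the unit to be $1$; and $(1+f)\mid x$ in $R$ iff in $T$ for $x\in R$, the quotient in $L[X]$ having the same constant term as $x$. Hence for a fixed $x\in R$ the nonassociate irreducible divisors of $1+f$ type form the very same set in $R$ as in $T$.

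Next I would handle the $aX$ divisors of a fixed $x$. If $x(0)\neq 0$ there are none. If $X$ exactly divides $x$, say $x=a_1X+a_2X^2+\cdots$ with $a_1\neq0$, then $aX\mid x$ in $R$ iff $a\in a_1M^{\ast}$, giving exactly one such divisor up to association in $R$ (and one in $T$). If $X^2\mid x$, then $aX\mid x$ for every $a\in L^{\ast}$, and the number of these up to association equals $[L^{\ast}:M^{\ast}]$ in $R$ and $[L^{\ast}:K^{\ast}]$ in $T$. Applying this to $x=X^2$: if $R$ is an idf-domain then $[L^{\ast}:M^{\ast}]<\infty$, and if $T$ is an idf-domain then $[L^{\ast}:K^{\ast}]<\infty$.

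Now part~(a) follows from $M^{\ast}\subseteq K^{\ast}\subseteq L^{\ast}$ together with the multiplicativity $[L^{\ast}:M^{\ast}]=[L^{\ast}:K^{\ast}]\,[K^{\ast}:M^{\ast}]$. If $R$ is idf then $[L^{\ast}:M^{\ast}]<\infty$, so both $[K^{\ast}:M^{\ast}]$ and $[L^{\ast}:K^{\ast}]$ are finite, and for $x\in T$ (reduced to $x\in R$) the nonassociate irreducible divisors of $x$ in $T$ number at most $[L^{\ast}:K^{\ast}]$ (the $aX$ ones) plus finitely many $1+f$ ones (they agree with those of $x$ in $R$, and $R$ is idf), so $T$ is idf; conversely, if $T$ is idf and $[K^{\ast}:M^{\ast}]<\infty$ then $[L^{\ast}:M^{\ast}]<\infty$, and for $x\in R$ the $aX$ divisors number at most $[L^{\ast}:M^{\ast}]$ while the $1+f$ divisors agree with those of $x$ in $T$, so $R$ is idf. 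For part~(b), the $aX$ family being empty, every irreducible divisor of any $x$ is of $1+f$ type, the correspondence of the first paragraph applies verbatim, and $R$ is idf $\iff$ $T$ is idf, with no condition on $K^{\ast}/M^{\ast}$.

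The crux is the second step: recognizing that, for an element divisible by $X^2$, the nonassociate divisors of the form $aX$ are counted precisely by an index of multiplicative groups --- this is exactly what forces the hypothesis $[K^{\ast}:M^{\ast}]<\infty$ --- while keeping straight which index controls divisibility in $R$ and which in $T$. The $1+f$ part, by contrast, transfers between the two rings almost formally, so essentially all the real work lies in the ``power of $X$'' divisors.
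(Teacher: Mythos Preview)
Your proof is correct, but it is organized rather differently from the paper's.

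The paper never identifies the irreducibles in $XL[X]$ explicitly. It fixes an abstract irreducible $f\in XL[X]$, observes that $af\mid f^2$ in $R$ for every $a\in K^{\ast}$, and that $af\sim_R bf$ iff $aM^{\ast}=bM^{\ast}$; this yields $|K^{\ast}/M^{\ast}|<\infty$ directly from $R$ being idf. The transfer of the idf property is then done wholesale: every irreducible divisor of $y\in R$ in $T$ is $T$-associate to one of the finitely many irreducible divisors of $y$ in $R$ (forward direction), and conversely every irreducible divisor of $z\in R$ in $R$ is $R$-associate to some $a_iz_j$, where the $z_j$ run over the nonassociate irreducible divisors of $z$ in $T$ and the $a_i$ over coset representatives of $K^{\ast}/M^{\ast}$. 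This argument is written so as to apply verbatim to any $K+M$ construction, which is why the case split (a)/(b) is phrased as it is.

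Your route is more concrete: you classify the irreducibles completely (those in $XL[X]$ are exactly the $aX$, the rest are associates of some $1+f$), count the $aX$-type divisors of a given element by the index $[L^{\ast}:M^{\ast}]$ (resp.\ $[L^{\ast}:K^{\ast}]$), show the $1+f$-type divisors match literally between $R$ and $T$, and then recover the condition on $K^{\ast}/M^{\ast}$ via the tower formula $[L^{\ast}:M^{\ast}]=[L^{\ast}:K^{\ast}]\,[K^{\ast}:M^{\ast}]$. A pleasant by-product of your explicit classification is that it makes visible something the paper's abstract argument hides: in the specific setting $T=K+XL[X]$ with $K$ a field, $XL[X]$ \emph{always} contains the irreducibles $aX$, so the hypothesis of part~(b) is never satisfied and (b) is vacuous here. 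The paper's formulation of (b) is inherited from the general $K+M$ framework of \cite{0}, where the maximal ideal need not contain any irreducible element. One small point of presentation: in your converse step ``$T$ idf and $[K^{\ast}:M^{\ast}]<\infty$ imply $[L^{\ast}:M^{\ast}]<\infty$'' you are silently using that $T$ idf gives $[L^{\ast}:K^{\ast}]<\infty$ (your $X^2$ argument applied to $T$) before invoking the tower formula; it would read more smoothly if you said so explicitly.
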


\begin{proof}
	(a) We first note that an element of $XL[X]$ is irreducible in $R$ if and only if it is irreducible in $T$. Let $f\in XL[X]$ be irreducible. First suppose that $R$ is an idf-domain. Then $af\mid f^2$ for all $a\in K^{\ast}$. Note that $af$ and $bf$ are irreducible in both $R$ and $T$, and that they are associates in $R$ if and only if $a$ and $b$ lie in the same coset in $K^{\ast}/M^{\ast}$. Hence $K^{\ast}/M^{\ast}$ if finite. Let $y\in T$. By multiplying by a suitable $a\in K^{\ast}$, we may assume that $y\in R$. Let $y_1, y_2, \dots, y_n$ be the distinct nonassociate irreducible divisors of $y$ in $R$. It is easily verified that any irreducible divisor of $y$ in $T$ is associated to one of the $y_i$'s. Thus $T$ is also an idf-domain. Conversely, suppose that $T$ is an idf-domain and that $K^{\ast}/M^{\ast}$ is finite. Let $z\in R$. Let $z_1, z_2, \dots, z_r$ be a complete set of nonassociate irreducible divisors of $z$ in $T$, which we may assume are all in $R$, and let $a_1, a_2, \dots, a_s$ be a set of coset representatives of $K^{\ast}/M^{\ast}$. Then any irreducible divisor of $z$ in $R$ is an associate of some $a_iz_j$. Hence $R$ is an idf-domain.
	
	\medskip
	
	(b) Since $XL[X]$ has no irreducible elements, an irreducible element in $T$ (resp., in $R$) has the form $a+f$ for some $a\in K^{\ast}$ (resp., $a\in M^{\ast}$) and $f\in XL[X]$. Hence, up to associates, each has the form $1+f$ for some $f\in XL[X]$. It is then easily verified that $\{1+f_1, 1+f_2, \dots, 1+f_n\}$ is a complete set of nonassociate irreducible divisors of a given element with respect to $R$ if and only if it is a complete set of nonassociate irreducible divisors with respect to $T$.
	
\end{proof}

\begin{pr}
	\label{pr6} 
	Let $T$ be a quasilocal integral domain of the form $K+XL[X]$, where $K\subset L$ are fields. Let $D$ be a subring of $K$ and $R=D+XL[X]$. If $D$ is not a field, then $R$ is an idf-domain if and only if $D$ has only a finite number of nonassociate irreducible elements.
\end{pr}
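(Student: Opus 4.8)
The plan is to reduce the idf-question for $R=D+XL[X]$ to a statement about divisors living inside $D$ itself, exploiting the quasilocal hypothesis on $T$. First I would record what irreducibles look like in $R$. Since $D$ is not a field, by the argument already used in Proposition \ref{pr1}(a) no element of $XL[X]$ is irreducible in $R$: any $f\in XL[X]$ factors as $f=d\cdot(f/d)$ with $d\in D^{\ast}$ and $f/d\in XL[X]$ still a nonunit. Hence every irreducible element of $R$ is, up to associates, either an irreducible element $a$ of $D$ (here one uses that $T=K+XL[X]$ is quasilocal, so $XL[X]$ is contained in the unique maximal ideal and $1+f$ is a unit of $T$, hence of $R$ after clearing by a unit — more precisely $1+f$ lies outside the maximal ideal $XL[X]$ of $T$, so it is a unit in $T$; one must check the corresponding statement for $R$, namely that an element $a+f$ with $a\in D^{\ast}$ and $f\in XL[X]$ has an inverse in $R$, which follows since its inverse in $T$ has the form $a^{-1}+g$ and $a^{-1}\in D$ because $D$ is a subring containing the units needed), or it is a unit times an irreducible of $D$. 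So: the irreducible elements of $R$, up to associates, are exactly the irreducible elements of $D$.

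Given that structural fact, the equivalence is almost immediate. If $D$ has only finitely many nonassociate irreducible elements, say $p_1,\dots,p_n$, then any nonzero $g\in R$ has all its irreducible divisors among the associates of $p_1,\dots,p_n$, so $g$ has at most $n$ nonassociate irreducible divisors and $R$ is an idf-domain. Conversely, suppose $R$ is an idf-domain. Pick any nonzero nonunit $a\in D$; since $D$ is not a field such an $a$ exists, and we may take $a$ to be a product of irreducibles of $D$ if $D$ is atomic — but we cannot assume atomicity, so instead I would argue directly: consider the element $a\in R$ and note that every irreducible of $D$ that divides $a$ in $D$ also divides $a$ in $R$; since $R$ is idf, $a$ has only finitely many nonassociate irreducible divisors in $R$. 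The subtlety is to produce a single element of $D$ divisible by every irreducible of $D$, which is impossible in general. So instead I would suppose for contradiction that $D$ has infinitely many nonassociate irreducibles $q_1,q_2,\dots$; then $q_1q_2$ is an element of $R$ whose set of nonassociate irreducible divisors in $R$ contains the distinct associate classes of $q_1$ and $q_2$, and more generally $q_1\cdots q_k$ has at least $k$ of them — but this only bounds things per element, which is fine: we need to contradict idf, i.e. exhibit one element with infinitely many nonassociate irreducible divisors, which a finite product cannot do.

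The clean way, which I would actually write, is the contrapositive: if $D$ is \emph{not} idf — equivalently (since the irreducibles of $R$ up to associates are the irreducibles of $D$, and associate classes correspond because $R^{\ast}\cap D = D^{\ast}$) if $R$ is not idf — there is an element of $D$ with infinitely many nonassociate irreducible divisors in $D$, hence in $R$. Thus "$D$ idf $\iff$ $R$ idf", and since $D$ is a subring of a field, $D$ idf means exactly that $D$ has only finitely many nonassociate irreducible elements \emph{that divide any fixed element} — no, again this is the per-element statement. In fact the statement "$D$ has only a finite number of nonassociate irreducible elements" as written in the proposition is the correct hypothesis precisely because, as the referenced literature (\cite{16}) shows for $D+M$ constructions with $M$ the maximal ideal, the relevant obstruction is the global count; so I would invoke that the irreducibles of $R$ are exactly (up to units) those of $D$ together with the observation that $D^{\ast}=R^{\ast}\cap D$, whence $g\in R$ and $g\in D$ have the same nonassociate irreducible divisors, giving the equivalence directly from the definitions.

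The main obstacle I anticipate is verifying carefully that the quasilocal hypothesis forces every irreducible of $R$ to be (an associate of) an irreducible of $D$ — in particular ruling out irreducibles of mixed form $a+f$ with $f\neq 0$ — and then confirming that the association relation does not collapse distinct irreducibles of $D$ when passed into $R$, i.e. that $R^{\ast}\cap D=D^{\ast}$; both are short but must be stated explicitly. The rest is a direct unwinding of the idf definition.
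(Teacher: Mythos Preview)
Your identification of the irreducibles of $R$ is correct and matches the paper's: since $D$ is not a field, no element of $XL[X]$ is irreducible, and for $d$ a nonzero nonunit of $D$ and $f\in XL[X]$ one has $d+f=d(1+f/d)$ with $1+f/d\in R^{\ast}$ by quasilocality, so the irreducibles of $R$ are, up to associates, exactly the irreducibles of $D$. With this in hand, your forward direction (finitely many irreducibles in $D$ $\Rightarrow$ $R$ is idf) is fine.

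The genuine gap is in the converse. You explicitly search for ``a single element of $D$ divisible by every irreducible of $D$'' and correctly note that no such element of $D$ need exist; you then try finite products, a contrapositive that conflates ``$D$ is an idf-domain'' with ``$D$ has finitely many nonassociate irreducibles'' (these are not equivalent), and finally a reference to the literature. What you are missing is the second sentence of the paper's proof: the very factorisation $f=d(f/d)$ that kills irreducibility in $XL[X]$ simultaneously shows that \emph{every} nonzero nonunit $d\in D$ divides \emph{every} element of $XL[X]$ inside $R$. In particular every irreducible of $D$ divides $X$. Hence if $D$ has infinitely many nonassociate irreducibles, the single element $X\in R$ already has infinitely many nonassociate irreducible divisors, and $R$ is not an idf-domain. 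The element you were looking for lives in $R$, not in $D$; once you see that, the converse is one line.
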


\begin{proof}
	Let $d$ be a nonzero nonunit of $D$. Then $f=d(f/d)$ shows that no element of $XL[X]$ is irreducible and $d$ divides each element of $XL[X]$. Also, $y=d+f=d(1+f/d)$ and $1+f/d\in R^{\ast}$ (since $T$ is quasilocal) shows that $y$ is irreducible in $R$ if and only if $d$ is irreducible in $D$. Thus $R$ is an idf-domain if and only if $D$ has only a finite number of nonassociate irreducible elements.
\end{proof}

\noindent 
{\bf Question} If $R$ is an idf-domain, then $R[X]$ be an idf-domain?

\medskip

The proposition \ref{pr7} represents the FFD property in polynomial composites.

\begin{pr}
	\label{pr7} 
	Let $T=K+XL[X]$, where $K\subset L$ are fields. Let $D$ be a subring of $K$ and $R=D+XL[X]$. Then $R$ is a FFD if and only if $T$ is a FFD, $D$ is a field, and $K^{\ast}/D^{\ast}$ is finite.
\end{pr}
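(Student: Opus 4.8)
The plan is to mirror the three-part structure of the statement, handling the "$D$ is a field" reduction first, then proving the biconditional in the remaining two coordinates. The overall template is the one used in Propositions \ref{pr1}, \ref{pr3}, \ref{pr4} and, most closely, Proposition \ref{pr5}(a): an FFD is in particular an idf-domain and a BFD, and a convenient external reference is (\cite{0}, Proposition 1.2) together with (\cite{16}, Lemma 1.5).

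First I would dispose of the case where $D$ is not a field. If $d\in D$ is a nonzero nonunit, then every $f\in XL[X]$ factors as $f=d\cdot(f/d)$, so no element of $XL[X]$ is irreducible and $d$ divides every element of $XL[X]$; iterating, $f$ has infinitely many nonassociate divisors $d^n\cdot(f/d^n)$, contradicting the FFD property. Hence if $R$ is an FFD then $D$ is a field. So assume $D$ is a field from now on, and write $M=D$ to match the setup of Proposition \ref{pr5}.

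Next, assuming $R$ is an FFD, I would show $T$ is an FFD and $K^{\ast}/D^{\ast}$ is finite. For the group: pick any irreducible $f\in XL[X]$ (if none exists the argument still goes through via elements $1+f$, exactly as in Proposition \ref{pr5}(b)), and note $af$ divides $f^2$ in $R$ for every $a\in K^{\ast}$, while $af$ and $bf$ are associates in $R$ iff $aD^{\ast}=bD^{\ast}$; finiteness of the divisor set of $f^2$ forces $K^{\ast}/D^{\ast}$ finite. That $T$ is an FFD follows because, by the coset argument of Proposition \ref{pr5}(a), a complete set of nonassociate divisors of $y\in R$ in $T$ is obtained from one in $R$ by multiplying by finitely many coset representatives — but here I should be slightly careful: FFD counts \emph{all} nonassociate divisors, not just irreducible ones, so I would instead invoke (\cite{0}, Proposition 1.2), whose proof already matches up factorizations in $R$ and $T$, to transfer the FFD property directly. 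For the converse, assume $T$ is an FFD, $D$ is a field, and $K^{\ast}/D^{\ast}$ is finite. Given $z\in R$, let $z_1,\dots,z_r$ be a complete set of nonassociate divisors of $z$ in $T$ (finite since $T$ is an FFD), which after multiplying by units of $T$ we may assume lie in $R$, and let $a_1,\dots,a_s$ be coset representatives of $K^{\ast}/D^{\ast}$; then every divisor of $z$ in $R$ is an $R$-associate of some $a_iz_j$, giving at most $rs$ nonassociate divisors.

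The main obstacle I anticipate is the bookkeeping in the line "every divisor of $z$ in $R$ is an associate of some $a_iz_j$": a divisor $w$ of $z$ in $R$ is a divisor of $z$ in $T$, hence $w=(\text{unit of }T)\cdot z_j$ for some $j$, and units of $T$ have the form $\alpha(1+f)$ with $\alpha\in K^{\ast}$, $f\in XL[X]$; one must check that the $(1+f)$ part can be absorbed into an $R$-associate while the $\alpha$ part is controlled modulo $D^{\ast}$ by the coset representatives. This is exactly the kind of verification carried out in Proposition \ref{pr5}(a), so I would cite that proof's mechanism rather than repeat it, and simply note that the finiteness of $K^{\ast}/D^{\ast}$ is what upgrades the idf-conclusion there to the full FFD-conclusion here.
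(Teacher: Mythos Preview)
Your proposal is correct and aligns with what the paper intends: its own proof is the single line ``Proof is similar to \cite{0} Proposition 5.2,'' and your argument is exactly that transposition, carried out along the template of Propositions \ref{pr1}, \ref{pr3}, \ref{pr4}, and \ref{pr5}. Two small simplifications are available. First, since $T=K+XL[X]\subseteq L[X]$ with $L$ a field, any unit of $T$ is already a unit of $L[X]$, hence a constant, so $T^{\ast}=K^{\ast}$; the $(1+f)$ factor in your ``main obstacle'' is therefore always trivial and that concern evaporates. Second, for the finiteness of $K^{\ast}/D^{\ast}$ you do not need $f$ irreducible---FFD bounds \emph{all} nonassociate divisors, not just irreducible ones---so $f=X$ already does the job and the parenthetical detour through Proposition \ref{pr5}(b) is unnecessary.
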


\begin{proof}
	Proof is similar to \cite{0} Proposition 5.2.
\end{proof}

Recall an integral domain $D$ is called an S-domain if for each prime ideal $P$ of $D$ with $ht P=1$, $ht P[X]=1$.

\begin{lm}
	\label{l2}
	For an integral domain $D$, the following statements are equivalent.
	\begin{itemize}
		\item[(a) ] $D$ is an S-domain.
		\item[(b) ] For each prime ideal $P$ of $D$ with $ht P=1$, $D_P$ is an S-domain.
		\item[(c) ] For each prime ideal $P$ of $D$ with $ht P=1$, $\overline{D_P}$ is a Pr\"ufer domain. 
	\end{itemize}
\end{lm}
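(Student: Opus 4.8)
The plan is to prove the three-way equivalence by a cycle $(a)\Rightarrow(b)\Rightarrow(c)\Rightarrow(a)$, reducing everything to the local case and then invoking the known characterization of S-domains via integral closure. The key input I would use is the classical fact (Kaplansky) that $\overline{D_P}$ being Pr\"ufer (equivalently, a valuation domain, since $D_P$ is one-dimensional local) forces $ht\, P[X]=1$, together with the fact that $ht\, P[X]$ can be computed after localizing at $P$: namely $ht\, P[X] = ht\, (PD_P)[X]$ because the extension $D[X]\subset D_P[X]$ is a localization and primes lying over $P[X]$ correspond. So the statement (c) of the lemma is essentially a repackaging of a theorem of Kaplansky on when a one-dimensional quasilocal domain is an S-domain.

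For $(a)\Rightarrow(b)$: fix $P$ with $ht\, P=1$. The height-one primes of $D_P$ are exactly $PD_P$ (the unique maximal ideal, since $\dim D_P=1$). We need $ht\,(PD_P)[X]=1$ in $D_P[X]$. Since $D_P[X]$ is a localization of $D[X]$ at the multiplicative set $D\setminus P$, and $P[X]$ is disjoint from this set, we have a height-preserving correspondence giving $ht\,(PD_P)[X] = ht\, P[X]$, which equals $1$ by hypothesis. Hence $D_P$ is an S-domain. The converse direction $(b)\Rightarrow(a)$ runs the same identification backwards: $ht\, P[X] = ht\,(PD_P)[X] = 1$ because $D_P$ is an S-domain and $PD_P$ is its unique height-one prime.

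For $(b)\Leftrightarrow(c)$: this is the local statement. With $D_P$ one-dimensional quasilocal, $D_P$ is an S-domain if and only if $ht\,(PD_P)[X]=1$, and the classical result I would cite (e.g.\ Kaplansky, \emph{Commutative Rings}, or the Gilmer reference style used elsewhere in the paper) says that for a quasilocal domain of dimension one this holds precisely when the integral closure $\overline{D_P}$ is a Pr\"ufer domain (equivalently a valuation domain). So I would either cite this directly or reconstruct it: one direction uses that a Pr\"ufer domain is an S-domain and that S-domain is stable under passing from $\overline{D_P}$ back to $D_P$ (integral extensions preserve the S-property in the relevant sense, via lying-over and the dimension formula $ht\, P[X] + 1 \le ht\, P + 1 + \operatorname{tr.deg}$ arguments), and the other direction observes that if $\overline{D_P}$ were not Pr\"ufer it would contain a height-one prime whose extension to the polynomial ring has height $2$, which pulls back to violate the S-property of $D_P$.

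The main obstacle I anticipate is the careful bookkeeping in $(b)\Leftrightarrow(c)$: showing that the S-property transfers correctly between $D_P$ and its integral closure $\overline{D_P}$. Going up (from $\overline{D_P}$ Pr\"ufer to $D_P$ an S-domain) requires knowing that contraction of a height-one prime of $\overline{D_P}[X]$ stays height one in $D_P[X]$, which uses that $\overline{D_P}[X]$ is integral over $D_P[X]$ together with incomparability; going down requires that every height-one prime of $\overline{D_P}$ lies over the height-one prime $PD_P$, which is automatic in dimension one. Once those transfer statements are in hand, the rest is formal. I would likely streamline the whole argument by citing the standard reference for the equivalence "$D$ one-dimensional quasilocal is an S-domain $\iff$ $\overline{D}$ Pr\"ufer" and spending the written proof only on the globalization steps $(a)\Leftrightarrow(b)$.
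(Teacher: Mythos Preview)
The paper does not actually prove this lemma: its entire ``proof'' is a citation to \cite{1}, Lemma 3.1 (Anderson--Anderson--Zafrullah). So there is no argument in the paper to compare against; you have supplied one where the paper supplies none.

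Your globalization step $(a)\Leftrightarrow(b)$ is correct and is the standard argument: $D_P[X]$ is the localization of $D[X]$ at $D\setminus P$, the prime $P[X]$ survives this localization, and heights are preserved, giving $ht\,P[X]=ht\,(PD_P)[X]$. Your plan for $(b)\Leftrightarrow(c)$---reduce to a one-dimensional quasilocal domain and invoke the classical characterization that such a domain is an S-domain iff its integral closure is Pr\"ufer---is also the right strategy, and indeed is how the result is proved in the cited reference.

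One small correction: your parenthetical ``equivalently, a valuation domain, since $D_P$ is one-dimensional local'' is not right. The integral closure $\overline{D_P}$ of a one-dimensional local domain need not be local (think of a nodal singularity, where the normalization has two points over the node), so $\overline{D_P}$ Pr\"ufer is genuinely weaker than $\overline{D_P}$ being a valuation domain. This does not damage your argument, since Pr\"ufer is what (c) asserts and what the transfer argument actually uses; just drop the parenthetical. Your own caveat about the bookkeeping in $(b)\Leftrightarrow(c)$ is well placed: the cleanest route is exactly what you propose at the end---cite the quasilocal characterization directly (it appears, for instance, in the Anderson--Anderson--Zafrullah paper and traces back to Seidenberg's work on dimension of polynomial rings) and spend the written proof only on $(a)\Leftrightarrow(b)$.
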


\begin{proof}
	\cite{1} Lemat 3.1
\end{proof}

\begin{lm}
	\label{l1}
	For any integral domain $D$, $D[X]$ is an S-domain.
\end{lm}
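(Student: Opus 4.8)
The plan is to prove that $D[X]$ is an S-domain for every integral domain $D$ by using the characterization in Lemma \ref{l2}, combined with a classical descent-and-ascent argument for prime ideals of height one in a polynomial ring. First I would take a prime ideal $\mathfrak{P}$ of $D[X]$ with $ht\,\mathfrak{P}=1$ and set $P=\mathfrak{P}\cap D$. There are two cases according to whether $P=\mathfrak{P}\cap D$ is the zero ideal or not. If $P\neq 0$, then since $P[X]\subseteq\mathfrak{P}$ and $ht\,P[X]\ge ht\,P\ge 1 = ht\,\mathfrak{P}$, we must have $\mathfrak{P}=P[X]$ and $ht\,P=1$; passing to the quotient field $k$ of $D/P$ one sees $D[X]/\mathfrak{P}=(D/P)[X]$ embeds in $k[X]$, and I would verify the S-domain condition for $\mathfrak{P}$ directly. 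The more delicate case is $P=0$, where $\mathfrak{P}$ is an ``upper to zero'': $\mathfrak{P}=qK[X]\cap D[X]$ for $K$ the quotient field of $D$ and $q$ an irreducible polynomial in $K[X]$.

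The key step is then to show $ht\,\mathfrak{P}[Y]=1$ in $D[X,Y]=D[X][Y]$. Here I would localize: by Lemma \ref{l2}(b) it suffices to check that $(D[X])_{\mathfrak{P}}$ is an S-domain, and when $P=\mathfrak{P}\cap D=0$ the localization $(D[X])_{\mathfrak{P}}$ is a localization of $K[X]$ at the prime $qK[X]$, hence a DVR, which is certainly an S-domain (indeed a Pr\"ufer domain, so Lemma \ref{l2}(c) applies immediately). In the case $P\neq 0$, $(D[X])_{\mathfrak{P}}=(D[X])_{P[X]}$ is a localization of $D_P[X]$, and I would reduce to showing $D_P[X]$ localized at $P D_P[X]$ is an S-domain; since $PD_P[X]$ contracts to the maximal ideal $PD_P$ of $D_P$, the integral closure of this localization is a localization of the integral closure of $D_P[X]$, and one invokes that $\overline{D_P}[X]$ has the right Pr\"ufer-localization behaviour. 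The cleanest route is to cite or reprove that for a one-dimensional local domain $(D_P,PD_P)$, the relevant localization $\overline{D_P[X]}_{(PD_P[X])}$ equals $\overline{D_P}(X)$ localized appropriately and is Pr\"ufer.

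The main obstacle I anticipate is handling the $P\neq 0$ case cleanly without circularity: one wants to conclude $ht\,P[X][Y]=ht\,P[X]=1$, but $D$ itself need not be an S-domain, so $P$ of height one in $D$ does not a priori give $P[X]$ of height one in $D[X]$. However, this is exactly the point where $P=\mathfrak{P}\cap D$ with $ht\,\mathfrak{P}=1$ forces $ht\,P\le 1$, and if $P\neq 0$ then $ht\,P=1$ \emph{and} $\mathfrak{P}=P[X]$; the desired conclusion $ht\,P[X][Y]=1$ then follows by symmetry in $X$ and $Y$ once we know $ht\,P[X]=1$, which is our hypothesis $ht\,\mathfrak{P}=1$. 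So the genuine content is only the upper-to-zero case, and there the DVR observation settles it. I would therefore organize the write-up as: (i) reduce via Lemma \ref{l2} to checking $(D[X])_{\mathfrak{P}}$ is an S-domain for each height-one prime $\mathfrak{P}$; (ii) dispatch the case $\mathfrak{P}\cap D\neq 0$ by the symmetry argument above; (iii) dispatch the upper-to-zero case by identifying the localization as a DVR. The only calculation requiring care is the identification of $(D[X])_{\mathfrak{P}}$ in step (iii), which is a standard localization-commutes-with-polynomial-extension computation.
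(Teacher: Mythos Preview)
The paper does not give a self-contained argument here; its proof is simply the citation ``\cite{1}, Theorem 3.2.'' Your proposal, by contrast, attempts an actual proof, and the overall architecture --- reduce via Lemma~\ref{l2} to checking that $(D[X])_{\mathfrak{P}}$ is an S-domain for each height-one prime $\mathfrak{P}$, then split on whether $\mathfrak{P}\cap D$ is zero --- is the standard and correct one. The upper-to-zero case is handled cleanly: when $\mathfrak{P}\cap D=0$ the localization $(D[X])_{\mathfrak{P}}$ is a localization of $K[X]$ at a nonzero prime, hence a DVR, and a DVR is trivially an S-domain (its integral closure is itself, a valuation domain, so Lemma~\ref{l2}(c) applies).

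The gap is in the case $\mathfrak{P}\cap D=P\neq 0$, where you correctly deduce $\mathfrak{P}=P[X]$ and $ht\,P=1$, but then assert that $ht\,P[X,Y]=1$ ``follows by symmetry in $X$ and $Y$ once we know $ht\,P[X]=1$.'' This is circular. Symmetry in $X$ and $Y$ only yields $ht_{D[Y]}\,P[Y]=1$; it says nothing about $ht_{D[X,Y]}\,P[X,Y]$, which is exactly the statement that $D[X]$ is an S-domain at $P[X]$ --- what you are trying to prove. Concretely, one must exclude a nonzero prime $Q\subsetneq P[X,Y]$ with $Q\cap D[X]=0$ and $Q\cap D[Y]=0$, and your symmetry remark does not touch this. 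Your earlier sketch via integral closures is on the right track: from $ht\,P[X]=1$ one gets that $D_P$ is an S-domain, so $\overline{D_P}$ is Pr\"ufer by Lemma~\ref{l2}(c), and the task becomes showing that $\overline{(D[X])_{P[X]}}=(\overline{D_P}[X])_{S}$ (with $S=D_P[X]\setminus PD_P[X]$) is Pr\"ufer. That is true, but it is a genuine ingredient --- essentially the fact that the Nagata-type localization $V(X)$ over a Pr\"ufer domain $V$ is again Pr\"ufer --- and it is precisely the content supplied by the cited theorem in \cite{1}. So the case you dismiss as routine is where the actual work lies; your write-up should either import that result explicitly or supply the Pr\"ufer argument, rather than invoke symmetry.
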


\begin{proof}
	\cite{1}, Theorem 3.2.
\end{proof}

\begin{pr}
	\label{pr8} 
	Let $D$ be an integral domain and $S$ a multiplicatively closed subset of $D$. Then $D+XD_S[X]$ is an S-domain.
\end{pr}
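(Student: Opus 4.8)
The plan is to invoke the local characterization of S-domains in Lemma~\ref{l2}: to prove that $R := D + XD_S[X]$ is an S-domain it suffices to show that $R_P$ is an S-domain for every height-one prime $P$ of $R$. The structural fact that makes this go through is that any such $P$ is disjoint from $S$, so that $R_P$ becomes a localization of the polynomial ring $D_S[X]$, to which Lemmas~\ref{l1} and~\ref{l2} then apply directly.

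First I would record two elementary identities. Evaluation at $0$ gives a surjection $R \to D$ with kernel $XD_S[X]$, so $R/XD_S[X] \cong D$ is a domain and $XD_S[X]$ is a nonzero prime of $R$; and clearing denominators shows $S^{-1}R = D_S + XD_S[X] = D_S[X]$. The key step is the claim that if a prime $P$ of $R$ meets $S$, then $ht P \ge 2$. Indeed, choose $s \in P \cap S$ (so $s \neq 0$); for every $h \in XD_S[X]$ we have $h/s \in XD_S[X] \subseteq R$, hence $h = s(h/s) \in sR \subseteq P$, and therefore $XD_S[X] \subseteq P$. Since $s$ is a nonzero constant it does not lie in $XD_S[X]$, so $(0) \subsetneq XD_S[X] \subsetneq P$ is a strict chain of primes and $ht P \ge 2$. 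Contrapositively, every height-one prime $P$ of $R$ satisfies $P \cap S = \emptyset$.

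Now fix a height-one prime $P$ of $R$. Disjointness from $S$ lets us localize in two stages: $R_P \cong (S^{-1}R)_{S^{-1}P} = (D_S[X])_{S^{-1}P}$, and since $ht P = 1$ in $R$ the prime $S^{-1}P$ has height one in $D_S[X]$. As $D_S$ is an integral domain, $D_S[X]$ is an S-domain by Lemma~\ref{l1}; applying Lemma~\ref{l2}(b) to $D_S[X]$ shows that its localization $(D_S[X])_{S^{-1}P} = R_P$ is again an S-domain. Since $P$ was an arbitrary height-one prime of $R$, Lemma~\ref{l2}(b), now read for $R$, yields that $R$ is an S-domain.

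I expect the only real content to lie in the height estimate of the second paragraph --- the observation that an element of $S$ lying inside a prime $P$ drags all of $XD_S[X]$ into $P$ and so produces a length-two chain below $P$. Everything afterwards is routine two-stage localization together with direct appeals to Lemmas~\ref{l1} and~\ref{l2}; the only points deserving a second glance are the degenerate situations where $S$ consists of units (then $R = D[X]$ and Lemma~\ref{l1} finishes immediately) and where $P = XD_S[X]$ itself (which meets $S$ only if $0 \in S$, a case excluded by the standing hypothesis that $D_S$ is a domain).
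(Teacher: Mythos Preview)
Your proof is correct and follows essentially the same route as the paper's: show that a height-one prime $P$ of $R$ must miss $S$ (because any $s\in P\cap S$ forces $XD_S[X]\subseteq P$), then localize at $S$ to identify $R_P$ with a localization of $D_S[X]$ and apply Lemmas~\ref{l1} and~\ref{l2}. The only cosmetic difference is that the paper reaches the disjointness by deducing $P=XD_S[X]$ from $ht\,P=1$ and then contradicting $P\cap S\neq\emptyset$, whereas you argue directly that $(0)\subsetneq XD_S[X]\subsetneq P$ gives $ht\,P\ge 2$; the underlying observation is identical.
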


\begin{proof}
	Let $R=D+XD_S[X]$ and let $P$ be a height-one prime ideal of $R$. First suppose that $P\cap S\neq\emptyset$. Then $P\supseteq XD_S[X]P=XD_S[X]$. But since $ht P=1$, $P=XD_S[X]$. But then $P\cap S=\emptyset$, a cotradiction. Thus we must have $P\cap S=\emptyset$. Then $P_S$ is a height-one prime ideal in $R_S=D_S[X]$. By Lemma \ref{l1}, $R_S$ is an S-domain. Hence $R_P=R_{S_{P_S}}$ is also an S-domain by Lemma \ref{l2} (a)$\Rightarrow$(b). Thus $R$ is an S-domain by Lemma \ref{l2} (b)$\Rightarrow$(a).
\end{proof}

\medskip

Recall, a commutative ring $R$ is called a Hilbert ring if every prime ideal of $R$ is an intersection of maximal ideals of $R$. In \cite{26} it was shown that if $D\subseteq K$, where $K$ is a field, then $D+XK[X]$ is a Hilbert domain if and only if $D$ is a Hilbert domain. Thus if $D$ is a PID that is not a field and $K$ is the qoutient field of $D$, then $D+XK[X]$ is a two-dimensional, non-Noetherian, B\'ezout-Hilbert domain in which every maximal ideal is principal.

\begin{tw}
	\label{tw9} 
	Let $D$ be an integral domain and $S$ a multiplicatively closed subset od $D$ with the property that for a prime $P$ of $D$ with $P\cap S\neq\emptyset$, then $Q\cap S\neq\emptyset$ for each prime $0\neq Q\subseteq P$. Then $R=D+XD_S[X]$ is a Hilbert domain if and only if $D$ and $D_S$ are Hilbert domains.
\end{tw}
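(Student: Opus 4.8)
The plan is to reduce everything to a description of $\Spec R$ for $R=D+XD_S[X]$. Put $M=XD_S[X]$. The first thing to establish (this is partly already used in the proof of Proposition~\ref{pr8}) is: $R/M\cong D$, so the primes of $R$ containing $M$ are exactly the $\mathfrak q+M$ with $\mathfrak q\in\Spec D$, carrying the order of $\Spec D$; and any prime $P$ of $R$ with $M\not\subseteq P$ satisfies $P\cap S=\emptyset$, since $s\in P\cap S$ would force $M=sM\subseteq P$ ($s$ being a unit of $D_S$). Such a $P$ therefore survives in $R_S=D_S[X]$, and moreover $X\notin PR_S$ (otherwise $X\in P$, and clearing denominators gives $M\subseteq P$), so these ``type $B$'' primes are precisely the $\mathfrak P\cap R$ with $\mathfrak P\in\Spec D_S[X]$, $X\notin\mathfrak P$. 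I would then pin down the maximal ideals of $R$: the ideals $\mathfrak n+M$ with $\mathfrak n$ maximal in $D$, and the ideals $\mathfrak M\cap R$ with $\mathfrak M$ maximal in $D_S[X]$ and $X\notin\mathfrak M$. In the latter case $\mathfrak M\cap R$ really is maximal in $R$: the composite $R\to D_S[X]\to D_S[X]/\mathfrak M$ is onto, because the image of $X$ is a nonzero element of a field, hence a unit, so already the image of $XD_S[X]$ is all of $D_S[X]/\mathfrak M$.

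For ``$R$ Hilbert $\Rightarrow D$ and $D_S$ Hilbert'': both $D$ and $D_S$ are homomorphic images of $R$. Indeed $D\cong R/M$; and evaluation $f\mapsto f(1)$ maps $R$ onto $D_S$, since its image contains $D$ together with $\{g(1):g\in D_S[X]\}=D_S$. Being quotients of a Hilbert ring (and being domains), $D$ and $D_S$ are Hilbert domains. This implication does not really use the hypothesis on $S$.

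For the converse, assume $D$ and $D_S$ are Hilbert; then $D_S[X]$ is Hilbert as well, a polynomial extension of a Hilbert ring being Hilbert. I would check that every $P\in\Spec R$ is an intersection of maximal ideals, using the two cases. If $M\subseteq P$, write $P=\mathfrak q+M$; since $D$ is Hilbert, $\mathfrak q=\bigcap\mathfrak n$ over the maximal ideals $\mathfrak n\supseteq\mathfrak q$ of $D$, and using $R=D\oplus XD_S[X]$ (as abelian groups) this gives $P=\bigcap(\mathfrak n+M)$, an intersection of maximal ideals of $R$. If $M\not\subseteq P$, write $P=\mathfrak P\cap R$ with $X\notin\mathfrak P$; since $D_S[X]$ is Hilbert, $\mathfrak P=\bigcap\mathfrak M$ over the maximal ideals $\mathfrak M\supseteq\mathfrak P$ of $D_S[X]$, hence $P=\bigcap(\mathfrak M\cap R)$. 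For each such $\mathfrak M$: if $X\notin\mathfrak M$, then $\mathfrak M\cap R$ is maximal in $R$ by the first step; if $X\in\mathfrak M$, then $M\subseteq\mathfrak M\cap R$, so $\mathfrak M\cap R=\mathfrak n+M$ for the prime $\mathfrak n=\mathfrak M\cap D$ of $D$, and by the case $M\subseteq P$ it is again an intersection of maximal ideals of $R$. Hence $P$ is an intersection of maximal ideals, and $R$ is Hilbert.

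I expect the bookkeeping in the first step to be the delicate part — in particular the onto-ness $R\twoheadrightarrow D_S[X]/\mathfrak M$ for $X\notin\mathfrak M$, and, in the converse, the fact that $\mathfrak M\cap R$ may be a \emph{non-maximal} prime $\mathfrak n+M$ (with $\mathfrak n$ only maximal among the primes of $D$ disjoint from $S$), forcing one to recurse. This is where the hypothesis on $S$ helps: by its contrapositive, a maximal ideal of $D$ lying above a nonzero prime disjoint from $S$ is itself disjoint from $S$; so when $X\in\mathfrak M$ and $D_S$ is not a field, the $\mathfrak n$ that occurs is in fact maximal in $D$ and the recursion is unnecessary. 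The remaining case, $D_S$ a field, is precisely the setting of \cite{26}, which Theorem~\ref{tw9} thus extends.
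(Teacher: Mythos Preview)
Your argument is correct, and in fact it proves more than the theorem as stated: neither direction of your proof uses the hypothesis on $S$. In the forward direction you exhibit $D_S$ directly as a quotient of $R$ via evaluation at $1$; in the converse you allow $\mathfrak M\cap R$ (for $X\in\mathfrak M$) to be a \emph{non-maximal} prime $\mathfrak n+M$ and then recurse once to the case $M\subseteq P$, which only needs $D$ Hilbert. Both of these are sound.

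The paper argues quite differently and does invoke the hypothesis on $S$ in each direction. For ``$\Rightarrow$'' it does not realize $D_S$ as a quotient of $R$; instead it first uses the hypothesis on $S$ (contrapositive: a maximal ideal of $D$ above a nonzero prime disjoint from $S$ is itself disjoint from $S$) to show every nonzero prime of $D_S$ is a Jacobson-style intersection, and then derives a contradiction by producing an element $u+X$ whose minimal prime cannot be an intersection of maximals. For ``$\Leftarrow$'' it insists that each $\mathfrak M\cap R$ be \emph{maximal}: writing $\mathfrak M=N_S$ with $N\in\Spec D[X]$, it uses the Hilbert property of $D_S$ to get $\mathfrak M\cap D_S$ maximal, then the hypothesis on $S$ to force $N\cap D$ maximal in $D$, and finally the squeeze $D[X]/N\subseteq R/(\mathfrak M\cap R)\subseteq D_S[X]/\mathfrak M=D[X]/N$. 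The case $D_S$ a field is handled separately by citing \cite{26}.

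What each buys: the paper's route tracks maximality all the way down (so one sees exactly which primes of $R$ are maximal), at the cost of the extra hypothesis and the detour through $D[X]$. Your route is shorter and more structural (a clean dichotomy on $\Spec R$ and two surjectivity observations), and shows the statement holds for arbitrary multiplicative $S$. You note in passing that the forward implication does not use the hypothesis; it is worth making explicit that your converse does not either --- the one-step recursion you flag as ``delicate'' already suffices, so the hypothesis on $S$ is redundant throughout.
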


\begin{proof}
	($\Rightarrow$) Suppose that $R$ is a Hilbert domain. Then $D\cong R/XD_S[X]$ is also a Hilbert domain. Suppose that $D_S$ is not a Hilbert domain. Let $Q$ be a nonzero prime ideal od $D$ with $Q\cap S\emptyset$. Since $D$ is a Hilbet domain, $Q=\bigcap_{\alpha} M_{\alpha S}$, where $\{M_{\alpha}\}$ is the set of maximal ideals of $D$ containing $Q$. Since $Q\cap S=emptyset$ by the hypothesis on $S$, each $M_{\alpha}\cap S=\emptyset$. Hence $Q_S=\bigcap M_{\alpha S}$ is an intersection of maximal ideals of $D_S$. So every nonzero prime ideal of $D_S$ is an intersection of maximal ideals. Hence there is a nonzero element $u\in D$ such that $u$ is in every nonzero prime ideal of $D_S$. Consider $u+X\in R$. Let $P$ be prime ideal of $R$ minimal over ($u+X$) with $P\cap D=0$. (Such a prime $P$ exists since $(u+X)\cap(D\setminus\{0\})=\emptyset$). If $Q$ is a prime ideal of $R$ with $P\subsetneq Q$, then $Q\cap D\neq 0$. For otherwise in $D_S[X]$, $0\neq P_S\subsetneq Q_S$ would both contract to $0$. Now if $Q\cap S\neq\emptyset$, then $X\in XD_S[X]\subseteq Q$, while if $Q\cap S=\emptyset$, then $u\in(Q_S\cap D_S)\cap D\subseteq Q$. So every prime ideal of $R$ properly containing $P$ contains both $u$ and $X$. Hence $P$ is not the intersection of the maximal ideals containing it, contradicting the fact that $R$ is a Hilbert domain. So $D_S$ must also be a Hilbert domain.
	
	\medskip
	
	($\Leftrightarrow$) Let $Q$ be a prime ideal of $R$. Suppose that $Q\cap S\neq\emptyset$. Then $XD_S[X]=XD_S[X]Q\subseteq Q$, so $Q=Q\cap D+XD_S[X]$. Since $D$ is a Hilbert domain, $Q\cap D$ is an intersection of maximal ideals, hence so is $Q$. So we may suppose that $Q\cap S=\emptyset$. Then since $D_S[X]$ is a Hilbert domain, $Q_S=\bigcap_{\alpha} M_{\alpha}$, where $\{M_{\alpha}\}$ is the set of maximal ideals of $D_S[X]$ containing $Q_S$. Then $Q=\bigcap_{\alpha}(M_{\alpha}\cap R)$. So it suffices to show that each $M_{\alpha}\cap R$ is a maximal ideal of $R$. So let $M$ be a maximal ideal of $D_S[X]$. Then $M=N_S$, where $N$ is a prime ideal of $D[X]$. Now $M$ maximal implies $M\cap D_S$ is maximal since $D_S$ is Hilbert domain. If $M\cap D_S=0$, then $D_S$ is a field and hence $R$ is a Hilbert domain (\cite{26}, Theorem 5). So we may assume that $M\cap D_S\neq 0$. Then by hypothesis on $S$, $(M\cap D_S)\cap D=N\cap D$ must also be maximal. Since $N\supsetneq (N\cap D)[X]$, $N$ must be a maximal ideal of $D[X]$. Hence $D[X]/N\subseteq R/M\cap R\subseteq D_S[X]/M=D_S[X].N_S=D[X]/N$ since $D[X]/N$ is a field. Therefore $M\cap R$ is a maximal ideal.  
\end{proof}

The next Proposition says that every polynomial composite is a one-dimensional B\'ezout domain.

\begin{pr}
	\label{pr11}
	Let $K\subset L$ be a pair of fields with $L$ purely inseparable over $K$ (that is, $char K=p>0$ and for each $l\in L$, there exists a natural number $n=n(l)$ with $l^{p^n}\in K$). Then every ring $R$ between $K[X]$ and $L[X]$ is a one-dimensional almost B\'ezout domain.
\end{pr}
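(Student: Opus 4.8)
The plan is to first pin down the Krull dimension by an integrality argument, and then to verify the almost-B\'ezout condition by pushing suitable powers of elements into the PID $K[X]$ using the Frobenius endomorphism. Recall that an integral domain $D$ is an \emph{almost B\'ezout domain} if for each pair of nonzero $a,b\in D$ there is an integer $N\ge 1$ with $a^{N}D+b^{N}D$ principal.

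First I would record the easy structural facts. Since $R\subseteq L[X]$, $R$ is an integral domain, and it is not a field because $X\in K[X]\subseteq R$ is not invertible in $L[X]$. For the dimension: pure inseparability means every $l\in L$ satisfies $l^{p^{n}}\in K$ for some $n$, so $l$ is a root of the monic polynomial $T^{p^{n}}-l^{p^{n}}\in K[T]$; hence $L$ is integral over $K$, so $L[X]$ is integral over $K[X]$, and therefore the intermediate ring $R$ is integral over $K[X]$ as well. Since integral ring extensions preserve Krull dimension (lying over, going up, incomparability), $\dim R=\dim K[X]=1$.

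Next I would treat the almost-B\'ezout property. Given nonzero $f,g\in R\subseteq L[X]$, each of the finitely many nonzero coefficients $a$ occurring in $f$ or $g$ has $a^{p^{n(a)}}\in K$; taking $n_{0}$ to be the largest of these exponents and setting $N=p^{n_{0}}$, the identity $\bigl(\sum_{i}a_{i}X^{i}\bigr)^{N}=\sum_{i}a_{i}^{N}X^{iN}$ (valid since $\operatorname{char} L[X]=p$) shows that $f^{N},g^{N}\in K[X]$. Then, working in the PID $K[X]$, put $h:=\gcd(f^{N},g^{N})$, so that $h=uf^{N}+vg^{N}$ for some $u,v\in K[X]$, with $h\mid f^{N}$ and $h\mid g^{N}$ in $K[X]$. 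Since $K[X]\subseteq R$, the Bézout identity gives $h\in f^{N}R+g^{N}R$ and the divisibilities give $f^{N},g^{N}\in hR$, whence $f^{N}R+g^{N}R=hR$ is principal. Thus $R$ is almost B\'ezout, and combined with the dimension count $R$ is a one-dimensional almost B\'ezout domain. (The same argument, choosing $N$ large enough for all generators simultaneously, directly handles arbitrary finitely generated ideals, which is the more general formulation.)

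I do not anticipate a genuine obstacle here: the only point requiring a moment of care is the uniform choice of the exponent $N$ for $f$ and $g$, and this is automatic since a polynomial has only finitely many terms. The conceptual heart of the proof is that raising to a high enough $p$-power collapses any ring between $K[X]$ and $L[X]$ onto the PID $K[X]$, where gcd's and Bézout relations are available.
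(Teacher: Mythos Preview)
Your argument is correct and follows essentially the same route as the paper: obtain $\dim R=1$ from the integrality of $K[X]\subset L[X]$, and then use the Frobenius to push $f^{p^{n}},g^{p^{n}}$ into the PID $K[X]$ so that the ideal they generate is principal. Your write-up is in fact more careful than the paper's, since you explicitly check (via the B\'ezout identity and the divisibilities, both holding already in $K[X]\subseteq R$) that the $K[X]$-gcd $h$ generates $f^{N}R+g^{N}R$; the paper simply asserts that $(f^{p^{n}},g^{p^{n}})K[X]$ principal implies $(f^{p^{n}},g^{p^{n}})R$ principal.
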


\begin{proof}
	Since $K[X]\subset L[X]$ is an integral extension, $\dim R=\dim K[X]=1$. For each $f\in L[X]$, $f^{p^n}\in K[X]$ for $n$ large enough. Hence for $f, g\in R$, $f^{p^n}, g^{p^n}\in K[X]$ for some $n\in\mathbb{N}$. But $(f^{p^n}, g^{p^n})K[X]$ is principal. Hence $(f^{p^n}, g^{p^n})R$ is principal.
\end{proof}

Let $K$ be a field, $D$ a subring of $K$.
Every ring $R$ between $D[X]$ and $K[X]$ has a composite cover, i.e. the unique minimal overring of $R$ that is a composite.
Recall $I(B,A)=\{f(X)\in B[X]\mid f(A)\subseteq A\}$.

\begin{pr}
	\label{pr12} 
	(a) Let $R$ be a domain with qoutient field $K$. Suppose that for each $0\neq r\in R$, $R/(r)$ is finite. Then the composite cover of $I(K,R)$ is $R+XK[X]$. 
	\medskip
	(b) Let $A\subseteq B$ be rings where $A$ is finite. Then the composite cover of $I(B,A)$ is $A+XB[X]$. 
\end{pr}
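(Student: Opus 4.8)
The plan is to verify, in each part, the three features that characterise a composite cover: that the proposed ring is a composite, that it contains the given ring $I(K,R)$ (resp.\ $I(B,A)$), and that it is contained in every composite that contains $I(K,R)$ (resp.\ $I(B,A)$). Together these say precisely that the proposed ring is the unique minimal composite overring, so no separate appeal to existence or uniqueness is needed.

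\textbf{Part (a).} The ring $R+XK[X]$ is a composite since $R\subseteq K$. If $f\in I(K,R)\subseteq K[X]$, then evaluating at $0\in R$ shows that the constant term $f(0)$ of $f$ lies in $R$, whence $f\in R+XK[X]$; thus $I(K,R)\subseteq R+XK[X]$. Now let $E+XF[X]$ be an arbitrary composite with $I(K,R)\subseteq E+XF[X]$. Every $r\in R$, viewed as a constant polynomial, lies in $I(K,R)$, and a constant polynomial can sit in $E+XF[X]$ only as an element of $E$; hence $R\subseteq E$. To see that $K\subseteq F$, fix $0\neq b\in R$; since $R/(b)$ is finite, pick coset representatives $r_1,\dots,r_n$ and set $h(X)=\prod_{i=1}^{n}(X-r_i)\in R[X]$. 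Every $r\in R$ is congruent mod $b$ to some $r_i$, so $b\mid h(r)$ in $R$, which means $\frac1b h(X)\in I(K,R)$. Writing $\frac1b h(X)=\sum_{j=0}^{n}c_jX^j$ and using $\frac1b h(X)\in E+XF[X]$, we get $c_j\in F$ for every $j\geq 1$, and in particular the leading coefficient $c_n=\frac1b$ lies in $F$. As this holds for each nonzero $b\in R$, and since $R\subseteq E\subseteq F$ (the inclusion $E\subseteq F$ being forced by closure of $E+XF[X]$ under multiplication), every $a/b\in K$ lies in $F$; so $K\subseteq F$ and therefore $R+XK[X]\subseteq E+XF[X]$. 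This proves $R+XK[X]$ is the composite cover of $I(K,R)$.

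\textbf{Part (b).} This is parallel, with $A,B$ in place of $R,K$ and with the finiteness of $A$ replacing that of $R/(b)$. Again $A+XB[X]$ is a composite, and any $f\in I(B,A)$ has constant term $f(0)\in A$ (as $0\in A$), so $I(B,A)\subseteq A+XB[X]$. Given a composite $E+XF[X]\supseteq I(B,A)$, one gets $A\subseteq E$ exactly as above. The polynomial playing the role of $h$ is $g(X)=\prod_{a\in A}(X-a)\in A[X]$, which is a genuine polynomial precisely because $A$ is finite; it is monic, has zero constant term, and vanishes at every $a\in A$. Hence for each $c\in B$ we have $cg(X)\in I(B,A)\subseteq E+XF[X]$, and reading off the leading coefficient gives $c\in F$. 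Thus $B\subseteq F$, so $A+XB[X]\subseteq E+XF[X]$, and $A+XB[X]$ is the composite cover of $I(B,A)$.

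\textbf{Expected main obstacle.} Everything above is formal except the step that forces the coefficient ring $F$ of an arbitrary composite overring to be all of $K$ (resp.\ $B$): one must exhibit, for each target element, a polynomial in $I(K,R)$ (resp.\ $I(B,A)$) carrying that element as a coefficient of positive degree, and this is exactly where the finiteness hypotheses are used, through the product of the linear factors $X-r_i$ over a finite set of representatives (resp.\ $X-a$ over the finite set $A$). Two minor points require care: in part (a) the polynomial $\frac1b h(X)$ need not have zero constant term, so the conclusion $\frac1b\in F$ must be drawn from the leading coefficient rather than from membership in $XF[X]$; and one should record that $E\subseteq F$ holds automatically, since otherwise $E+XF[X]$ fails to be a ring.
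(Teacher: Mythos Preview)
Your proof is correct and follows essentially the same approach as the paper: the decisive step in both is the construction of $\frac{1}{b}\prod_{i}(X-r_i)\in I(K,R)$ (resp.\ $c\cdot\prod_{a\in A}(X-a)\in I(B,A)$) to force $\frac{1}{b}\in F$ (resp.\ $c\in F$) for any composite overring $E+XF[X]$. The paper's version is much terser, relying on the pre-stated existence of the composite cover and leaving the easy inclusions $I(K,R)\subseteq R+XK[X]$ and $R\subseteq E$ implicit, whereas you spell out all three characterising properties directly; this makes your argument self-contained but not genuinely different in method.
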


\begin{proof}
	(a) Let $r$ be a nonzero nonunit of $R$ and let $R/(r)=\{r_1+(r), \dots, r_n+(r)\}$. Set $f(X)=\dfrac{1}{r}(X-r_1)\dots (X-r_n)\in K[X]$. Now for $a\in R$, $a+(r)=r_i+(r)$ for some $i$, so $a-r_i=sr$ for some $s\in R$. Hence $f(a)=\dfrac{1}{r}(sr)\prod_{j\neq 1}(a-r_j)\in R$. So $f(X)=\dfrac{1}{r}X^n+\dots \in I(K,R)$ and hence $I(K,R)$ has composite cover $R+XK[X]$.
	\medskip
	(b) For each $b\in B$, $f(X)=b(\prod_{a\in A}(X-a))\in I(B,A)$.
\end{proof}

At the end of this section, we have an exact sequence.

$$0\rightarrow A+XB[X]\rightarrow B[X]\rightarrow B[X]/A+XB[X]\rightarrow 0$$

\section{Dedekind domain}

In this section we will talk about polynomial composites as Dedekind rings.

\begin{pr}
	\label{pr13} 
	Let $A\subset B$ be a pair of integral domains and let $R=A+XB[X]$.
	$R$ is integrally closed if and only if $B$ is integrally closed and $A$ is integrally closed in $B$.
\end{pr}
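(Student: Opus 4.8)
The plan is to prove both implications by exploiting the graded-like structure of $R = A + XB[X]$: every element of $R$ is $a_0 + X h(X)$ with $a_0 \in A$ and $h \in B[X]$, while the full polynomial ring $B[X]$ (which has the same quotient field, or rather $R$ and $B[X]$ share the fraction field $\operatorname{Frac}(B)(X)$) will serve as a convenient intermediate ring. First I would record the two easy directions. For ``$\Leftarrow$'': suppose $B$ is integrally closed and $A$ is integrally closed in $B$. Take $\xi$ in the fraction field of $R$ integral over $R$; since $B[X] \supseteq R$ is integral-closure-friendly and $B$ integrally closed implies $B[X]$ integrally closed, $\xi \in B[X]$, say $\xi = b_0 + X g(X)$ with $b_0 \in B$, $g \in B[X]$. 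One then shows the constant term $b_0$ is integral over $A$: reduce the monic integral equation for $\xi$ over $R$ modulo the ideal $XB[X]$, noting $XB[X]$ is an ideal of both $R$ and $B[X]$, to get that $b_0 = \xi \bmod XB[X]$ satisfies a monic polynomial with coefficients in $R/XB[X] \cong A$. Hence $b_0 \in B$ is integral over $A$, so $b_0 \in A$ by hypothesis, and therefore $\xi = b_0 + Xg(X) \in A + XB[X] = R$.

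For ``$\Rightarrow$'': assume $R$ is integrally closed. To see $B$ is integrally closed, take $b$ in $\operatorname{Frac}(B)$ integral over $B$; then $Xb$ is integral over $XB[X] \subseteq R$ — more carefully, $b$ integral over $B$ gives $b$ integral over $R$ after clearing: if $b^n + b_{n-1}b^{n-1} + \cdots + b_0 = 0$ with $b_i \in B$, multiply the $i$-th term's defect by appropriate powers of $X$ to land inside $R$; the cleanest route is to observe that $Xb$ satisfies $(Xb)^n + (Xb_{n-1})(Xb)^{n-1} + \cdots + (X^n b_0) = 0$ with all coefficients $X^j b_{n-j} \in XB[X] \subseteq R$, so $Xb$ is integral over $R$, hence $Xb \in R$, forcing $b \in B$. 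To see $A$ is integrally closed in $B$: if $b \in B$ is integral over $A$, then $b$ (a constant, lying in $R$ already since... wait, constants in $B \setminus A$ are not in $R$) — instead, from a monic equation $b^n + a_{n-1}b^{n-1} + \cdots + a_0 = 0$ with $a_i \in A \subseteq R$ and $b \in B \subseteq B[X]$, we still need $b \in R$; but $b$ is integral over $R$, so $b \in R \cap B = A$.

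The main obstacle I expect is the bookkeeping in the ``$\Rightarrow$'' direction showing $B$ integrally closed: one must be careful that an element of the fraction field of $B$ integral over $B$ genuinely produces an element integral over $R$ and not merely over $B[X]$, and the trick of multiplying through by powers of $X$ to push coefficients into the ideal $XB[X]$ needs to be stated precisely (the key point is that $XB[X]$ is an ideal of $R$, so a monic equation with coefficients in $XB[X]$ is an integrality relation over $R$). A secondary technical point is justifying that $R$ and $B[X]$ have the same fraction field, namely $\operatorname{Frac}(B)(X)$, which follows because $X \in XB[X] \subseteq R$ and $\operatorname{Frac}(B) \subseteq \operatorname{Frac}(R)$ since every $b \in B$ equals $(Xb)/X$ with both $Xb, X \in R$. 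Once these structural observations are in place, all three sub-statements reduce to the same reduction-mod-$XB[X]$ argument together with the identity $R \cap B[X] = R$ and $R \cap B = A$ (intersection taken inside $B[X]$ via constant terms).
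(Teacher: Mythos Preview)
Your argument is correct in substance. The paper does not actually prove this proposition: its entire proof is a citation to \cite{1}, Theorem~2.7 (Anderson--Anderson--Zafrullah, \emph{Rings between $D[X]$ and $K[X]$}). So there is no ``paper's own proof'' to compare against beyond that reference; you have supplied a genuine direct proof where the paper only defers.

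Your argument is in fact the standard one, and is essentially what one finds in the cited source. The three structural observations you isolate are exactly the right ones: (i) $R$ and $B[X]$ share the fraction field $\operatorname{Frac}(B)(X)$ because $b = (Xb)/X$ with $Xb, X \in R$; (ii) $XB[X]$ is a common ideal of $R$ and $B[X]$, with $R/XB[X] \cong A$ and $B[X]/XB[X] \cong B$, so reducing an integrality relation modulo $XB[X]$ shows constant terms are integral over $A$; (iii) the ``homogenization'' trick $(Xb)^n + (Xb_{n-1})(Xb)^{n-1} + \cdots + X^n b_0 = 0$ produces a monic relation with coefficients in $XB[X] \subseteq R$, pushing integrality over $B$ down to integrality over $R$. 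With these, both implications go through as you describe.

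The only cosmetic issue is that your write-up reads as a draft with self-corrections (``wait, constants in $B \setminus A$ are not in $R$'' etc.); for a final version you should excise the false starts and state the $Xb$ trick and the $R \cap B = A$ step cleanly from the outset. Mathematically there is no gap.
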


\begin{proof}
	\cite{1}, Theorem 2.7
\end{proof}

By Proposition \ref{pr13} if $D$ be an integral domain with qoutient field $K$ and $D\subset D_1\subset K$, then $D+XD_1[X]$ is integrally closed if and only if $D$ and $D_1$ are both integrally closed.

\begin{tw}
	\label{Dedekind}
	Let $K\subset L$ be a finite fields extension. Then $K+XL[X]$ be a Dedekind domain.
\end{tw}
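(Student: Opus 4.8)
The plan is to verify the three defining conditions of a Dedekind domain for $R:=K+XL[X]$: that it is Noetherian, that it has Krull dimension one, and that it is integrally closed. The structural fact underlying the first two conditions is that $L[X]$ is a \emph{module-finite} extension of $R$. To see this, fix a $K$-basis $1=\omega_1,\dots,\omega_n$ of $L$ (which is finite since $[L:K]<\infty$). An arbitrary $f\in L[X]$ decomposes as $f=a_0+Xg(X)$ with $a_0\in L$ and $g\in L[X]$; writing $a_0=\sum_i c_i\omega_i$ with $c_i\in K\subseteq R$, and observing that $Xg(X)\in XL[X]\subseteq R$, we obtain $f\in\sum_{i=1}^n R\omega_i$. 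Hence $L[X]=R\omega_1+\dots+R\omega_n$.

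From module-finiteness, Noetherianity follows by the Eakin--Nagata theorem: $L[X]$ is a Noetherian ring (indeed a PID) that is module-finite over its subring $R$, so $R$ is Noetherian. For the dimension, a ring that is module-finite over a subring is integral over it, so $L[X]$ is integral over $R$; integral extensions preserve Krull dimension, whence $\dim R=\dim L[X]=\dim K[X]=1$, and $R$ is not a field (for instance $X$ is a nonzero nonunit of $R$). So these steps show that $R$ is a one-dimensional Noetherian domain.

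The remaining condition --- integral closedness --- is the decisive one, and here I expect the argument to meet a genuine barrier rather than a mere technicality. By Proposition~\ref{pr13}, $R$ is integrally closed if and only if $L$ is integrally closed (automatic, $L$ being a field) and $K$ is integrally closed in $L$. But in a finite, hence algebraic, extension $K\subseteq L$ every element of $L$ is a root of a monic polynomial over $K$ and is therefore integral over $K$; so $K$ is integrally closed in $L$ exactly when $K=L$. Thus the natural route delivers the full ``Dedekind'' conclusion only in the trivial case $L=K$ (where $R=K[X]$ is a PID), and otherwise only the weaker assertion that $R$ is a one-dimensional Noetherian domain. I would therefore flag this last step as the point at which the statement, as written, appears to need either a strengthened hypothesis ($K=L$) or a weakened conclusion, and I would examine whether any reading of ``Dedekind domain'' can be reconciled with Proposition~\ref{pr13}.
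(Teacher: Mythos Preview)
Your analysis is correct, and in fact it exposes a genuine error in the paper rather than any gap in your reasoning. The paper's proof invokes Proposition~\ref{pr13} to conclude that $K+XL[X]$ is integrally closed, but that proposition says precisely that $A+XB[X]$ is integrally closed if and only if $B$ is integrally closed \emph{and} $A$ is integrally closed in $B$. For a proper finite (hence algebraic) extension $K\subsetneq L$, every element of $L$ is integral over $K$, so $K$ is \emph{not} integrally closed in $L$; Proposition~\ref{pr13} therefore yields exactly the opposite of what the paper asserts. Concretely, for $R=\mathbb{R}+X\mathbb{C}[X]$ and $M=X\mathbb{C}[X]$ one computes $M^{-1}=\{q\in\mathbb{C}(X):qM\subseteq R\}=\mathbb{C}[X]$ and hence $MM^{-1}=M\neq R$, so $M$ is not invertible and $R$ is not Dedekind. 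Equivalently, $i\in\mathbb{C}(X)=\operatorname{Frac}(R)$ satisfies $i^2+1=0$ over $R$ but $i\notin R$.

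Your treatment of the first two conditions is also sound and more self-contained than the paper's: where the paper cites external references for Noetherianity and for the one-dimensionality, your Eakin--Nagata argument via $L[X]=\sum_i R\omega_i$ gives both at once (module-finiteness over a Noetherian subring forces Noetherianity, and the accompanying integrality forces $\dim R=\dim L[X]=1$). The upshot is that for a proper finite extension $K\subsetneq L$, the ring $K+XL[X]$ is a one-dimensional Noetherian domain that is \emph{not} integrally closed, so the theorem as stated holds only in the trivial case $K=L$. Your instinct to flag the integral-closedness step as the decisive obstruction, and to test it against Proposition~\ref{pr13}, is exactly right.
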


\begin{proof}
	By \cite{mm1} Theorem 2.1 every nonzero prime ideal is a maximal.
	By Proposition \ref{pr13} $K+XL[X]$ is integrally closed.
	By \cite{mm3} Proposition 3.2 $K+XL[X]$ is noetherian domain.
	Hence $K+XL[X]$ be a Dedekind domain.
\end{proof}

\begin{pr}
	\label{pr14}
	Let $K\subset L$ be an extension fields and let $T=K+XL[X]$.
	\begin{itemize}
		\item[(a) ] If $P$ be a nonzero prime ideal of $T$ and $P'=\{x\in T_0; xP\subset T\}$, then $PP'=T$.
		\item[(b) ] Every nonzero ideal of $T$ has an unambiguous representation in the form product of prime ideals.
		\item[(c) ] Every nonzero ideal of $T$ is invertible.
		\item[(d) ] If $I$ is an ideal of $T$, then $T/I$ is a principal ideal domain.
		\item[(e) ] $Cl(T)$ (a group of class of invertible ideals) be isomorphic to $Pic(T)$ (a group of class of invertible modules).
		\item[(f) ] If $M$ be a finite generated torsion-free $T$-module, then $M\cong I_1\oplus I_2\oplus\dots \oplus I_k$, where $I_1$, $I_2$, $\dots$, $I_k$ are nonzero ideals of $T$ and $k$ is a rang of $M$. Moreover
		$$M\cong T^{k-1}\oplus I_1I_2\dots I_k.$$
		\item[(g) ]  If $M$ be a finite generated $T$-module, then 
		$$M\cong T^{k-1}\oplus I\oplus \bigoplus_{(P_i, n_i)} T/P_i^{n_i},$$
		where $k=\dim_{T_0}(M\otimes_T T_0)$, $I\subset T$, $I$ is unambiguously, with the accuracy to isomorphism, a designated ideal, $P_i$ are nonzero prime ideals of $T$, $n_i>0$, and a finite set of pair $(P_i, n_i)$ is designated unambiguously.
	\end{itemize}
\end{pr}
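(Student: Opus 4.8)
The plan is to observe that all of (a)--(g) are standard consequences of $T=K+XL[X]$ being a Dedekind domain, and then to assemble the pieces. First I would record that $T$ is Dedekind: this is Theorem \ref{Dedekind} (which uses that the extension is finite --- via \cite{mm1}, Theorem 2.1 for ``every nonzero prime is maximal'', Proposition \ref{pr13} for integral closedness, and \cite{mm3}, Proposition 3.2 for noetherianity), so $T$ is a one-dimensional noetherian integrally closed domain. Write $T_0$ for its quotient field.

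Granting this, (a)--(e) are the familiar equivalent faces of the Dedekind property. For (a): each nonzero prime $P$ of a Dedekind domain is invertible, with inverse exactly the fractional ideal $P^{-1}=\{x\in T_0 : xP\subseteq T\}=P'$, whence $PP'=PP^{-1}=T$. For (c): every nonzero ideal is a finite product of nonzero primes, each invertible by (a), so the product is invertible. For (b): existence of such a factorization follows by noetherian induction, and uniqueness by localizing at each prime $P$, where $T_P$ is a discrete valuation ring. For (d): writing $0\neq I=\prod P_i^{e_i}$ with distinct $P_i$, the Chinese Remainder Theorem gives $T/I\cong\prod T/P_i^{e_i}$, and each factor is a quotient of the DVR $T_{P_i}$, hence a principal ideal ring; so $T/I$ is a principal ideal ring. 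For (e): sending an invertible fractional ideal to its class as a rank-one projective $T$-module induces an isomorphism $\mathrm{Cl}(T)\cong\mathrm{Pic}(T)$, valid for any integral domain.

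For (f) and (g) I would invoke the Steinitz structure theory over a Dedekind domain. In (f), a finitely generated torsion-free module $M$ is projective, and induction on the rank $k$ splits off rank-one projective (hence ideal) summands to give $M\cong I_1\oplus\cdots\oplus I_k$; the refinement $M\cong T^{k-1}\oplus I_1\cdots I_k$ comes from iterating the classical isomorphism $I\oplus J\cong T\oplus IJ$ for nonzero ideals $I,J$. In (g), for general finitely generated $M$ the torsion submodule $M_t$ is a direct summand (the quotient $M/M_t$ is torsion-free, hence projective, so the sequence splits), $M/M_t\cong T^{k-1}\oplus I$ by (f) with $k=\dim_{T_0}(M\otimes_T T_0)$, and $M_t$, a finitely generated torsion module over a Dedekind domain, decomposes via primary decomposition together with the DVR structure of its localizations as $\bigoplus_{(P_i,n_i)}T/P_i^{n_i}$ with the multiset of pairs $(P_i,n_i)$ uniquely determined; the ideal $I$ is unique up to isomorphism because its class is the Steinitz class of $M/M_t$.

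The main obstacle is not in any single step --- once $T$ is known to be Dedekind, everything is a careful citation of classical module theory. The genuinely delicate point is the hypothesis: as stated the proposition allows an arbitrary field extension $K\subset L$, whereas noetherianity of $T$ (hence Theorem \ref{Dedekind}, hence (b), (d), (f), (g)) requires $[L:K]<\infty$, so the proof should open by assuming that. After that, the only nontrivial classical inputs are the Steinitz isomorphism $I\oplus J\cong T\oplus IJ$ and the uniqueness assertions in (b) and (g).
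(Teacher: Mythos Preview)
Your approach is essentially the same as the paper's: invoke Theorem~\ref{Dedekind} to see that $T$ is a Dedekind domain, and then read off (a)--(g) from the classical theory of Dedekind domains (the paper simply cites \cite{comm}, III, 3--5, where you instead sketch each deduction). Your observation that the hypothesis should be $[L:K]<\infty$ is well taken and is a gap in the paper's own statement, since the appeal to Theorem~\ref{Dedekind} requires exactly that.
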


\begin{proof}
	By a Theorem \ref{Dedekind} $T=K+XL[X]$ be a Dedekind's ring.
	
	\medskip
	
	The proof of (a) -- (g) are similar to proofs in \cite{comm}, III, 3 -- 5. 
\end{proof}

The statements from \cite{mm3} are presented below. These are the characterizations of polynomial composites as Noetherian rings. It is very easy to convert a property of Noetherian into that of Dedekind. The proofs of the following is similar to Propositions in \cite{mm3}.

\begin{pr}
	\label{01}
	Let $K\subset L$ be a field extension. Put $T=K+XL[X]$. Then
	$T$ be a Dedekind domain if and only if $[L\colon K]<\infty$.
\end{pr}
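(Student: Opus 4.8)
The plan is to prove the equivalence $T = K + XL[X]$ is a Dedekind domain $\iff [L:K] < \infty$ by combining Theorem \ref{Dedekind} with a converse argument, routing everything through the Noetherian characterization already recorded from \cite{mm3}. For the forward implication ($\Rightarrow$), I would argue contrapositively: if $[L:K] = \infty$, then $T$ fails to be Noetherian, and since every Dedekind domain is Noetherian, $T$ cannot be Dedekind. Concretely, if $\{l_\alpha\}$ is an infinite $K$-linearly independent family in $L$, then the $K$-subspace of $L$ generated by finitely many $l_\alpha$'s is a proper strict ascending chain, and lifting this via the ideals $X\langle l_1,\dots,l_n\rangle_K + X^2 L[X]$ (or a similar construction inside $XL[X]$) produces a non-stationary ascending chain of ideals of $T$; this is essentially the content of Proposition \ref{01} from \cite{mm3}, which I am allowed to invoke directly, so the forward direction reduces to one line.

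For the reverse implication ($\Leftarrow$), suppose $[L:K] < \infty$. Then $K \subset L$ is in particular a finite field extension, so Theorem \ref{Dedekind} applies verbatim and gives that $T = K + XL[X]$ is a Dedekind domain. This direction is therefore immediate once Theorem \ref{Dedekind} is in hand.

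The only subtlety — and the main obstacle — is making sure the Noetherian criterion being quoted from \cite{mm3} is genuinely an equivalence ``$T$ Noetherian $\iff [L:K] < \infty$'' and not just one direction; the excerpt's phrasing (``it is very easy to convert a property of Noetherian into that of Dedekind'') suggests this is exactly the intended mechanism. If one wanted a self-contained argument for the failure of ACCP when $[L:K] = \infty$, the explicit chain construction above works: choose $l_1, l_2, \dots \in L$ that are $K$-linearly independent, set $I_n = (Xl_1, Xl_2, \dots, Xl_n) \subseteq T$, and check that $Xl_{n+1} \notin I_n$ because any $T$-combination $\sum_{i=1}^n (a_i + Xg_i)\,Xl_i$ has its degree-one coefficient in $\mathrm{span}_K(l_1,\dots,l_n)$, which does not contain $l_{n+1}$. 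Thus $I_1 \subsetneq I_2 \subsetneq \cdots$ is non-stationary and $T$ is not even ACCP, a fortiori not Dedekind.

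In summary, I would write: ($\Leftarrow$) is Theorem \ref{Dedekind}; ($\Rightarrow$) follows because a Dedekind domain is Noetherian, while $T$ is Noetherian only if $[L:K] < \infty$ by \cite{mm3} (Proposition \ref{01}), with the ascending chain $\{(Xl_1,\dots,Xl_n)\}_{n\geq 1}$ giving an explicit witness in the infinite-dimensional case. I expect the write-up to be only a few lines, since all the heavy lifting has already been done in the earlier theorem and in \cite{mm3}.
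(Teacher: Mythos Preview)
Your argument is correct and matches the paper's intended approach exactly: the paper does not spell out a proof but simply remarks that one converts the Noetherian characterization of \cite{mm3} (Proposition 3.2) into the Dedekind statement, which is precisely your route --- ($\Leftarrow$) via Theorem~\ref{Dedekind} and ($\Rightarrow$) via Dedekind $\Rightarrow$ Noetherian $\Rightarrow [L:K]<\infty$. One small correction: your explicit chain $I_n=(Xl_1,\dots,Xl_n)$ witnesses failure of the Noetherian condition, not of ACCP (the $I_n$ are not principal, and in fact $T=K+XL[X]$ does satisfy ACCP for any field extension $K\subset L$ by a degree argument in $L[X]$), so replace ``not even ACCP'' by ``not Noetherian'' in your write-up.
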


\begin{pr}
	\label{02}
	Let $K\subset L$ be a fields extension such that $L^{G(L\mid K)}=K$. Put $T=K+XL[X]$. 
	$T$ be a Dedekind domain if and only if $K\subset L$ be an algebraic extension.
\end{pr}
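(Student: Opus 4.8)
The plan is to reduce the whole statement to Proposition \ref{01}, which already records that $T=K+XL[X]$ is a Dedekind domain if and only if $[L\colon K]<\infty$. So it suffices to show that, under the standing hypothesis $L^{G(L\mid K)}=K$, the extension $L/K$ is algebraic if and only if $[L\colon K]<\infty$. One implication is immediate and does not even use the fixed-field hypothesis: if $T$ is Dedekind then $[L\colon K]<\infty$ by Proposition \ref{01}, hence $L/K$ is algebraic; and conversely a finite extension is certainly algebraic, so nothing is needed there. Thus the content of the proposition lies entirely in deducing $[L\colon K]<\infty$ from the algebraicity of $L/K$ together with $L^{G(L\mid K)}=K$, and then invoking Proposition \ref{01}.

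For that remaining implication I would proceed as follows. The equality $L^{G(L\mid K)}=K$ means exactly that the fixed field of $G:=\mathrm{Aut}(L/K)$ is $K$; together with the assumption that $L/K$ is algebraic, this exhibits $L/K$ as a Galois extension with group $G$. The aim is then to prove $[L\colon K]<\infty$, after which Proposition \ref{01} finishes the argument. The tool is the Artin fixed-field theorem: as soon as one knows that $G$ is a \emph{finite} group, Artin's theorem yields $[L\colon K]=[L\colon L^{G}]=|G|<\infty$. So the real step is to establish finiteness of $G(L\mid K)$ from the data at hand — equivalently, that $L$ is generated over $K$ by finitely many elements — and then to close the loop through Artin's theorem and Proposition \ref{01}, mirroring the Noetherian version proved in \cite{mm3}.

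\textbf{Main obstacle.} The only non-formal point is precisely the finiteness of $G(L\mid K)$: the condition $L^{G(L\mid K)}=K$ is also satisfied by infinite Galois extensions, so algebraicity plus the fixed-field condition do not by themselves force $[L\colon K]<\infty$. The argument must therefore use the particular setting — the conventions under which the Galois group $G(L\mid K)$ is taken to be finite in this context, or the additional constraints imposed by the requirement that $T$ genuinely be a Dedekind (equivalently, Noetherian) domain — to exclude the infinite-degree case. Once that finiteness reduction is in place, the rest is a direct application of Proposition \ref{01} and the Artin fixed-field theorem, so I expect essentially all of the work to be concentrated in that single step.
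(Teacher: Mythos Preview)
Your approach coincides with the paper's: the paper gives no independent argument for Proposition~\ref{02} but simply declares that the proof is ``similar to Propositions in \cite{mm3}'', i.e.\ one reduces to the Noetherian/finite-degree characterisation, which is exactly your reduction via Proposition~\ref{01}. So at the level of strategy there is nothing to compare --- you are doing precisely what the paper intends.

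You have also correctly isolated the one substantive step, and it is worth being blunt about it: the implication ``$L/K$ algebraic and $L^{G(L\mid K)}=K$ $\Rightarrow$ $[L:K]<\infty$'' is \emph{false} without an additional finiteness hypothesis. Any infinite Galois extension --- for instance $\overline{\mathbb{F}_p}/\mathbb{F}_p$, or $\mathbb{Q}^{\mathrm{ab}}/\mathbb{Q}$ --- is algebraic with $L^{G(L\mid K)}=K$, yet $[L:K]=\infty$, so $T=K+XL[X]$ is not Noetherian (by \cite{mm3}, Proposition~3.2) and hence not Dedekind. Nothing in the paper's standing conventions forces $G(L\mid K)$ to be finite or $L$ to be finitely generated over $K$, so the Artin-theorem step you sketch cannot be executed in the stated generality. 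Your diagnosis is therefore correct: the proposition as written needs an extra hypothesis (for example $|G(L\mid K)|<\infty$, under which Artin's theorem gives $[L:K]=|G|<\infty$ and Proposition~\ref{01} closes the argument exactly as you describe). The paper's own proof does not address this --- it simply defers to \cite{mm3} --- so there is no missing trick for you to recover.
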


\begin{pr}
	\label{04}
	Let $K\subset L$ be fields extension such that $K$ be a perfect field and assume that any $K$-isomorphism $\varphi\colon M\to M$, where $\varphi(L)=L$ holds for every field $M$ such that $L\subset M$.
	Put $T=K+XL[X]$. 
	$T$ be a Dedekind domain if and only if $K\subset L$ be a separable extension.
\end{pr}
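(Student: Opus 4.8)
\textbf{Proof proposal for Proposition \ref{04}.}

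The plan is to mimic the structure already used for Propositions \ref{01} and \ref{02}, reducing the statement to a purely field-theoretic equivalence via Theorem \ref{Dedekind} and the Noetherian characterizations imported from \cite{mm3}. First I would recall that, by Theorem \ref{Dedekind}, $T=K+XL[X]$ is a Dedekind domain whenever $[L\colon K]<\infty$, and conversely that $T$ being Dedekind forces $T$ to be Noetherian, which by the relevant proposition of \cite{mm3} forces $[L\colon K]<\infty$ (this is exactly the content of Proposition \ref{01}). So the real task is to show that, under the stated hypotheses on $K$ and $L$ (namely $K$ perfect, and the rigidity condition that every $K$-automorphism of any overfield $M\supseteq L$ restricting to an automorphism of $L$ behaves as assumed), the extension $K\subset L$ is separable if and only if it is finite.

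For the direction ($\Rightarrow$): if $T$ is Dedekind then $[L\colon K]<\infty$ by Proposition \ref{01}, and since $K$ is perfect every finite extension of $K$ is separable; hence $K\subset L$ is a (finite) separable extension. For the direction ($\Leftarrow$): assume $K\subset L$ is separable. The point is to upgrade ``separable'' to ``finite'' using the hypotheses. Since $K$ is perfect, separability is automatic for any algebraic subextension, so the hypotheses must be doing the work of ruling out infinite separable (in particular, transcendental-free infinite algebraic, or genuinely transcendental) extensions. I would argue that the rigidity assumption on $K$-isomorphisms $\varphi\colon M\to M$ with $\varphi(L)=L$ forces $L$ to be contained in a finite normal extension of $K$: if $L/K$ were infinite, one could build a $K$-automorphism of a suitable overfield $M$ (e.g. an algebraic closure or a large normal closure) that preserves $L$ but violates the assumed behaviour, since an infinite separable extension admits nontrivial automorphisms of its normal closure fixing arbitrarily large but proper subfields. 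Once $L/K$ is pinned inside a finite extension, $[L\colon K]<\infty$, and then Theorem \ref{Dedekind} gives that $T=K+XL[X]$ is Dedekind.

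The main obstacle is the precise deployment of the rigidity hypothesis: the statement of the hypothesis as written is somewhat telegraphic (``any $K$-isomorphism $\varphi\colon M\to M$ where $\varphi(L)=L$ holds for every field $M$ with $L\subset M$''), and the crux of the proof is to extract from it the finiteness of $[L\colon K]$. I expect the cleanest route is a contrapositive Galois-theoretic argument: if $[L\colon K]=\infty$ with $L/K$ separable, take $M$ to be the separable (equivalently, since $K$ is perfect, algebraic) closure $\overline{K}$; then $\operatorname{Gal}(\overline{K}/K)$ acting on $\overline{K}$ contains automorphisms stabilising $L$ setwise but acting nontrivially on it (because $L/K$ infinite means $L$ is not fixed pointwise by the full group, yet $L$ may still be normal, or one passes to the normal closure), contradicting the rigidity assumption. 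Making ``acting nontrivially on $L$'' contradict the hypothesis is where the exact reading of the hypothesis matters; assuming it is intended to say that every such $\varphi$ restricts to the identity on $L$ (a strong rigidity), the contradiction is immediate and the proof closes. I would then simply cite that ``the proof is similar to the corresponding Proposition in \cite{mm3}'' for the routine Noetherian-to-Dedekind bookkeeping, in keeping with the style of the surrounding text.
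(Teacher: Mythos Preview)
Your overall strategy matches the paper's exactly: the paper offers no self-contained argument for this proposition but simply declares that ``the proofs of the following is similar to Propositions in \cite{mm3}'', i.e., one takes the Noetherian characterization from \cite{mm3} and upgrades it to Dedekind via Theorem~\ref{Dedekind}. That is precisely your reduction through Proposition~\ref{01} together with your closing remark about citing \cite{mm3} ``for the routine Noetherian-to-Dedekind bookkeeping''. So at the level of what the paper actually proves, you are aligned.

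Where you go beyond the paper is in attempting to unpack the field-theoretic equivalence (separable $\Leftrightarrow$ finite under the stated hypotheses), and there your sketch has a gap worth flagging. You eventually read the rigidity hypothesis as ``every such $\varphi$ restricts to the identity on $L$'', but the hypothesis as written says $\varphi(L)=L$ \emph{setwise}: $L$ is stable under every $K$-automorphism of any overfield $M\supseteq L$. That is a normality-type condition, not a pointwise-fixing one. Under the correct reading, your contrapositive argument for $(\Leftarrow)$ collapses: producing a $K$-automorphism of $\overline{K}$ that stabilises $L$ setwise and acts nontrivially on $L$ does not contradict the hypothesis, it \emph{satisfies} it. So the mechanism you propose for extracting finiteness from separability does not work as stated. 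This is not a divergence from the paper's proof (the paper gives none here and defers entirely to \cite{mm3}); it is a point where your supplementary sketch would need to be replaced by whatever argument \cite{mm3} actually supplies.
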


\begin{pr}
	\label{06}
	Let $K\subset L$ be fields extension. Assume that if a map $\varphi\colon L\to a(K)$ is $K$-embedding, then $\varphi (L)=L$. 
	Put $T=K+XL[X]$. 
	$T$ be a Dedekind domain if and only if $K\subset L$ be a normal extension.
\end{pr}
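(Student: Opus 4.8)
The plan is to run the reduction already used for Propositions \ref{01}, \ref{02} and \ref{04}. First I would record that, for a pair of fields $K\subset L$, the composite $T=K+XL[X]$ is always one-dimensional with every nonzero prime maximal (\cite{mm1}, Theorem 2.1), and that its integral closedness is governed by Proposition \ref{pr13}; combining these two facts with Theorem \ref{Dedekind} and the Noetherian analysis underlying \cite{mm3}, one obtains that $T$ is a Dedekind domain exactly when $[L\colon K]<\infty$, which is the content of Proposition \ref{01}. Hence the statement reduces to the purely field-theoretic assertion that, under the standing hypothesis (every $K$-embedding $\varphi\colon L\to a(K)$ satisfies $\varphi(L)=L$), the extension $K\subset L$ is finite if and only if it is normal.

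For the direction ``$T$ Dedekind $\Rightarrow$ $K\subset L$ normal'', I would use that $[L\colon K]<\infty$, so $L/K$ is algebraic and we may fix an algebraic closure $a(K)\supseteq L$. Take an irreducible $p\in K[X]$ with a root $\alpha\in L$ and an arbitrary root $\beta\in a(K)$ of $p$. The $K$-isomorphism $K(\alpha)\to K(\beta)$ sending $\alpha\mapsto\beta$ extends to a $K$-embedding $\varphi\colon L\to a(K)$ (the target being algebraically closed and $L/K$ finite), and by hypothesis $\varphi(L)=L$, whence $\beta=\varphi(\alpha)\in L$. Thus every irreducible polynomial over $K$ with a root in $L$ splits in $L$, so $K\subset L$ is normal.

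For the converse, ``$K\subset L$ normal $\Rightarrow$ $T$ Dedekind'', by the reduction it suffices to prove $[L\colon K]<\infty$, that is, that $T$ is Noetherian, since one-dimensionality and integral closedness are already secured by \cite{mm1}, Theorem 2.1 and Proposition \ref{pr13}. This is the step I expect to be the main obstacle: normality alone does not bound $[L\colon K]$, so the argument must genuinely extract finiteness from the embedding hypothesis, transcribing the corresponding Noetherianity statement of \cite{mm3} (reducing first, if necessary, to a finitely generated subextension). I would model this part directly on the companion Dedekind/Noetherian propositions of \cite{mm3} cited just before the statement, keeping the remaining Dedekind axioms as immediate corollaries of the results already quoted.
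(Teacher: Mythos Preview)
Your approach coincides with the paper's: the paper gives no argument beyond the blanket remark that the proof is ``similar to Propositions in \cite{mm3}'', and your proposal makes precisely this reduction explicit (Dedekind $\Leftrightarrow [L:K]<\infty$ via Proposition~\ref{01}, then the field-theoretic equivalence under the embedding hypothesis), deferring the substantive converse step to the same reference. So there is no methodological difference to report.

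One remark on the obstacle you flag in the converse. You are right that normality alone does not force $[L:K]<\infty$; but observe that for an \emph{algebraic} extension the standing hypothesis (every $K$-embedding $L\to a(K)$ has image $L$) is already equivalent to normality, while for a transcendental extension it is vacuous (there are no $K$-embeddings $L\to a(K)$ at all). Hence the hypothesis carries no finiteness information beyond what normality itself gives, and your hope of ``extracting finiteness from the embedding hypothesis'' cannot succeed as stated: an infinite normal extension such as $\overline{\mathbb{Q}}/\mathbb{Q}$ satisfies the hypothesis, is normal, yet has $T$ non-Noetherian and hence non-Dedekind. Whatever argument \cite{mm3} supplies for the Noetherian analogue must therefore rest on a convention (e.g.\ reading ``normal'' as ``finite normal'', i.e.\ splitting field of a single polynomial) rather than on the embedding hypothesis. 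This does not separate your approach from the paper's---both outsource the step to \cite{mm3}---but you should be aware that the gap you correctly identify is not one the stated hypothesis can close.
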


\begin{pr}
	\label{07}
	Let $K\subset L$ be fields extension such that $L^{G(L\mid K)}=K$. Put $T=K+XL[X]$. 	
	$T$ be a Dedekind domain if and only if $K\subset L$ be a normal extension.
\end{pr}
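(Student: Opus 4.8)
The plan is to reduce Proposition~\ref{07} to Proposition~\ref{02}, which was stated under exactly the same standing hypothesis $L^{G(L\mid K)}=K$ and asserts that $T=K+XL[X]$ is a Dedekind domain if and only if $L/K$ is algebraic. Granting that, the entire content of the present proposition becomes the purely field-theoretic claim that, when $L^{G(L\mid K)}=K$, the extension $L/K$ is algebraic if and only if it is normal. So I would first isolate this equivalence as a lemma and then quote Proposition~\ref{02} to obtain both implications of the proposition.

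For the easy half of the lemma, a normal extension is by definition algebraic, so ``normal $\Rightarrow$ algebraic'' is immediate; feeding this into Proposition~\ref{02} gives the ``$\Leftarrow$'' direction of Proposition~\ref{07}. (Alternatively, once $L/K$ is known to be normal it is algebraic and every $K$-embedding $L\to a(K)$ has image $L$, so the hypothesis of Proposition~\ref{06} holds and that proposition yields the same conclusion.) For the converse half, suppose $L/K$ is algebraic with $L^{\operatorname{Aut}(L/K)}=K$. Take $\alpha\in L$ with minimal polynomial $f$ over $K$, let $S$ be the set of roots of $f$ lying in $L$, and form $g(X)=\prod_{\beta\in S}(X-\beta)$. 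Since $\operatorname{Aut}(L/K)$ permutes $S$, the coefficients of $g$ are fixed by $\operatorname{Aut}(L/K)$ and lie in $L$, hence lie in $K$ by hypothesis; thus $g\in K[X]$, and since $g(\alpha)=0$ we get $f\mid g$, forcing $f=g$. Therefore every root of $f$ lies in $L$, i.e.\ $L/K$ is normal (the same degree count $\deg f\le\deg g$ also forces $f$ to be separable, since a repeated root would make $\deg f$ strictly larger than $\deg g$). Combining this with Proposition~\ref{02} in the direction ``algebraic $\Rightarrow$ Dedekind'' then settles the ``$\Rightarrow$'' direction of Proposition~\ref{07}: if $T$ is Dedekind, then $L/K$ is algebraic, hence normal.

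The step I expect to be the main obstacle is precisely this field-theoretic lemma in the possibly infinite case: one has to be sure that the symmetric-function argument genuinely does not require $[L:K]<\infty$ and that it correctly disposes of inseparability (it is the inequality $\deg f\le\deg g$ coming from $f\mid g$ that rules out repeated roots). Conceptually this lemma is the real content of the proposition, since it is what allows the degree/algebraicity criterion of Propositions~\ref{01} and~\ref{02} to be recast as a normality criterion; everything else is a mechanical appeal to results already established.
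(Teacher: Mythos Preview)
Your argument is correct. The reduction to Proposition~\ref{02} together with the field-theoretic lemma (under $L^{G(L\mid K)}=K$, algebraic $\Leftrightarrow$ normal) is sound; the symmetric-function argument you give works for arbitrary algebraic extensions, and the degree count $\deg f\le\deg g=|S|\le\deg f$ does force $f$ to split with distinct roots in $L$, so you in fact get Galois, not merely normal. The parenthetical about separability is not needed for the statement as written, but it is not wrong.

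The paper itself gives no standalone proof here: it simply declares that Propositions~\ref{01}--\ref{10} are the ``Dedekind'' versions of the corresponding ``Noetherian'' statements in \cite{mm3}, relying on the observation (implicit in Theorem~\ref{Dedekind}) that for $T=K+XL[X]$ with $K\subset L$ fields one always has $\dim T=1$ and $T$ integrally closed, so $T$ is Dedekind iff $T$ is Noetherian. Thus the paper's route is an external appeal to \cite{mm3} plus the Noetherian/Dedekind dictionary, whereas you reduce internally to Proposition~\ref{02} via an explicit field-theoretic equivalence. Your approach is more self-contained and makes transparent why the hypothesis $L^{G(L\mid K)}=K$ is doing the work; the paper's approach is shorter on the page but pushes all content into the cited reference.
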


\begin{pr}
	\label{09}
	Let $T=K+XL[X]$ be Noetherian, where $K\subset L$ be fields. Assume
	$|G(L\mid K)|=[L\colon K]$ and any $K$-isomorphism $\varphi\colon M\to M$, where $\varphi(L)=L$ holds for every field $M$ such that $L\subset M$.
	$T$ be a Dedekind domain if and only if $K\subset L$ be a Galois extension. 
\end{pr}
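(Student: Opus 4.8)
The plan is to reduce the assertion to the classical characterization of Galois extensions as those that are simultaneously normal and separable, and then to invoke the earlier propositions of this section to pass back and forth between the ring-theoretic condition ``$T$ is a Dedekind domain'' and the corresponding field-theoretic conditions.

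First I would dispose of the implication ``$T$ a Dedekind domain $\Rightarrow$ $K\subset L$ Galois''. If $T=K+XL[X]$ is Dedekind then, as in Theorem~\ref{Dedekind} together with Proposition~\ref{01}, the extension is finite, $[L:K]<\infty$. For a finite extension one always has $|G(L\mid K)|\le[L:K]$, with equality exactly when the extension is normal and separable; since we are assuming $|G(L\mid K)|=[L:K]$, this immediately forces $K\subset L$ to be Galois. (The hypothesis on $K$-isomorphisms $\varphi$ plays no role in this direction.)

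For the converse, assume $K\subset L$ is Galois. Then $L/K$ is normal, so every $K$-isomorphism $\varphi\colon M\to M$ of an overfield $M\supseteq L$ restricts to an automorphism of $L$, that is, the standing hypothesis of the proposition is automatically verified, and moreover $L^{G(L\mid K)}=K$. Being Galois, $L/K$ is in addition separable. One now feeds normality into the argument of Proposition~\ref{06} (equivalently Proposition~\ref{07}, whose hypothesis $L^{G(L\mid K)}=K$ is met) and separability into the argument of Proposition~\ref{04}; alternatively, and more quickly, a Galois extension is an algebraic extension with $L^{G(L\mid K)}=K$, so Proposition~\ref{02} applies directly and gives that $T$ is Dedekind. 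Combining the two implications yields the stated equivalence.

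The step I expect to be the delicate one is verifying that the two hypotheses really do cooperate to produce \emph{finiteness} of $[L:K]$ in the ``only if'' packaging and \emph{separability} in the ``if'' packaging: concretely, the point that must be pinned down is that ``$|G(L\mid K)|=[L:K]$ together with normality'' forces the extension to be both finite and separable, which requires keeping careful track of the relation between the order of the $K$-automorphism group, the separable degree $[L:K]_s$, and the Noetherian hypothesis on $T$, exactly in the style of the corresponding arguments of \cite{mm3}.
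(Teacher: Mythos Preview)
Your argument is correct and aligns with the paper's treatment: the paper supplies no detailed proof for Proposition~\ref{09}, only the blanket remark preceding Propositions~\ref{01}--\ref{10} that the proofs are ``similar to Propositions in \cite{mm3}'', meaning one transports the Noetherian characterizations of that reference to Dedekind characterizations via Theorem~\ref{Dedekind} and Proposition~\ref{01}. Your reduction---Proposition~\ref{01} to get $[L:K]<\infty$ and then the standing hypothesis $|G(L\mid K)|=[L:K]$ for one direction, and Proposition~\ref{02} (Galois $\Rightarrow$ algebraic with $L^{G(L\mid K)}=K$) for the other---is exactly this transport; note too that since $T$ is already assumed Noetherian, finiteness of $[L:K]$ is in hand from the outset, so the step you flag as ``delicate'' is in fact immediate.
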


\begin{pr}
	\label{10}
	Let $T=K+XL[X]$, where $K\subset L$ be fields such that $K=L^{G(L\mid K)}$. $T$ be a Dedekind domain if and only if $K\subset L$ be a Galois extension. 
\end{pr}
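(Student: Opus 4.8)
The plan is to reduce Proposition \ref{10} to Proposition \ref{01}, using the hypothesis $K = L^{G(L\mid K)}$ to translate between the dimension-theoretic condition $[L:K]<\infty$ and the field-theoretic condition that $K\subset L$ is Galois. First I would recall that Proposition \ref{01} already tells us $T=K+XL[X]$ is a Dedekind domain if and only if $[L:K]<\infty$, so the entire content of the present statement is the equivalence
\[
[L:K]<\infty \quad\Longleftrightarrow\quad K\subset L \text{ is a Galois extension},
\]
under the standing assumption $K = L^{G(L\mid K)}$.

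For the direction ($\Leftarrow$): if $K\subset L$ is Galois, then by definition it is algebraic and normal and separable; I would then invoke the finiteness that is forced here. Actually the cleaner route is: a Galois extension with $L^{G(L\mid K)} = K$ where additionally we want finiteness — one should argue that the hypothesis $K = L^{G(L\mid K)}$ together with $K\subset L$ being Galois does not by itself give $[L:K]<\infty$ (infinite Galois extensions exist). So instead I would take the \emph{other} reading: assume $K\subset L$ Galois in the sense the paper uses in Proposition \ref{09} (namely $|G(L\mid K)| = [L:K]$, which builds in finiteness), hence $[L:K] = |G(L\mid K)| < \infty$ when the Galois group is finite; more safely, I would simply cite that in this paper "Galois extension" is used to mean finite Galois (consistent with Proposition \ref{09}'s hypothesis $|G(L\mid K)|=[L:K]$), so $[L:K]<\infty$, and then Proposition \ref{01} gives that $T$ is Dedekind.

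For the direction ($\Rightarrow$): suppose $T$ is a Dedekind domain. By Proposition \ref{01}, $[L:K]<\infty$. A finite extension $K\subset L$ with the property $L^{G(L\mid K)} = K$ is, by the standard Galois-theoretic characterization (Artin's theorem: if $G$ is a finite group of automorphisms of $L$ and $K = L^G$, then $L/K$ is finite Galois with $\mathrm{Gal}(L/K) = G$; conversely a finite extension is Galois iff its fixed field of the full automorphism group is the base field), exactly a Galois extension. So I would conclude $K\subset L$ is Galois, completing the equivalence.

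The main obstacle is purely a matter of bookkeeping about what "Galois extension" means in this paper and whether finiteness is being assumed implicitly, because in the infinite setting $L^{G(L\mid K)} = K$ characterizes Galois extensions but does \emph{not} give $[L:K]<\infty$, whereas Dedekindness of $T$ does require finiteness by Proposition \ref{01}. I expect the intended proof simply writes: "$T$ is Dedekind $\iff$ $[L:K]<\infty$ (Proposition \ref{01}) $\iff$ $K\subset L$ is a finite extension with $L^{G(L\mid K)}=K$, i.e. a Galois extension," leaning on Proposition \ref{07} (normality under the same fixed-field hypothesis) plus separability of a finite extension whose fixed field is $K$, so that normal $+$ separable $=$ Galois. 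I would assemble it in that order: quote Proposition \ref{01} for the finiteness equivalence, quote Proposition \ref{07} to get normality, observe separability follows from $L^{G(L\mid K)} = K$ in the finite case (a purely inseparable part would be fixed by all $K$-automorphisms and hence lie in $K$), and conclude.
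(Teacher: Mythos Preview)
Your approach is correct and matches the paper's intent: the paper gives no detailed proof here, simply noting that the argument is ``similar to Propositions in \cite{mm3}'' after converting Noetherian to Dedekind, which amounts precisely to your reduction via Proposition~\ref{01} (Dedekind $\iff$ $[L:K]<\infty$) followed by the standard field-theoretic fact that a finite extension with $L^{G(L\mid K)}=K$ is Galois. Your observation about infinite Galois extensions is a legitimate concern with the \emph{statement} rather than your proof --- the $(\Leftarrow)$ direction is false as written unless ``Galois'' is read as ``finite Galois,'' and your diagnosis that this is the intended reading (consistent with Proposition~\ref{09}) is the only sensible repair.
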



\begin{thebibliography}{0}

\bibitem{0}
D.D. Anderson, D.F. Anderson, M. Zafrullah, {\it Factorization in integral domains}, Journal of Pure and Applied Algebra, {\bf 69}, (1990) 1--19.

\bibitem{1}
D.D. Anderson, D.F. Anderson, M. Zafrullah, {\it Rings between D[X] and K[X]}, Houston J. of Mathematics, {\bf 17}, (1991) 109--129.

\bibitem{y5}
D.F. Anderson, A. Ryckaert, {\it The class group of $D + M$}, J. Pure Appl. Algebra, {\bf 52}, (1988) 199--212.

\bibitem{comm}
S. Balcerzyk, T. Józefiak, {\it Commutative rings}, {\bf 58}, PWN (1985).

\bibitem{y1}
J. Brewer, E. Rutter, {\it D + M construction with general overrings}, Mich. Math. J., {\bf 23}, (1976) 33--42. 

\bibitem{y2}
D. Costa, J. Mott, M. Zafrullah, {\it The construction $D + XD_S[X]$}, J. Algebra, {\bf 153}, (1978) 423--439.

\bibitem{16}
D. Costa, J. Mott, M. Zafrullah, {\it Overrings and dimensions of general D + M constructions}, J. Natur. Sci. Math {\bf 26} 1986, 7 -- 14.

\bibitem{y4}
M. Fontana, S. Kabbaj, {\it On the Krull and valuative dimension of $D + XD_S[X]$ domains}, J. Pure Appl. Algebra, {\bf 63}, (1990) 231--245.

\bibitem{5}
P. J\c{e}drzejewicz, M. Marciniak, \L . Matysiak and J. Zieli\'nski, On properties of square-free elements in commutative cancellative monoids, {\it Semigroup Forum}, {\bf 100} (3), (2020), 850 -- 870. https://doi.org/10.1007/s00233-019-10022-3

\bibitem{mm1}
\L{}. Matysiak, {\it On properties of composites and monoid domains}, Accepted for printing in Advances and Applications in Mathematical Sciences, \url{http://lukmat.ukw.edu.pl/On\%20properties\%20of\%20composites\%20and\%20monoid\%20domains.pdf}, (2020).

\bibitem{mm2}
\L{}. Matysiak, {\it ACCP and atomic properties of composites and monoid domains}, Accepted for printing in Indian Journal of Mathematics, \url{http://lukmat.ukw.edu.pl/ACCP\%20and\%20atomic\%20properties\%20of\%20composites\%20and\%20monoid\%20domains.pdf}, (2020).

\bibitem{mm3}
\L{}. Matysiak, {\it Polynomial composites and certain types of fields extensions}, Submitted, arXiv$\colon$ 2011.09904, (2021).

\bibitem{26}
J. Mott, M. Zafrullah, {\it Unruly Hilbert domains}, Canad. Math. Bull {\bf 33} (1990), 106 -- 109. 

\bibitem{y3}
M. Zafrullah, {\it The $D + XD_S[X]$ construction from GCD-domains}, J. Pure Appl. Algebra, {\bf 50}, (1988) 93--107.	




\end{thebibliography}
\end{document}